\DeclareMathOperator{\Tor}{\textup{Tor}}
\DeclareMathOperator{\Irr}{\textup{Irr}}
\DeclareMathOperator{\Hom}{\textup{Hom}}
\newcommand{\Ind}{\operatorname{Ind}}
\newcommand{\bdm}{\begin{displaymath}}
\newcommand{\edm}{\end{displaymath}}
\newcommand{\Sl}{\operatorname{Sl}}
\newcommand{\Spin}{\operatorname{Spin}}
\newcommand{\sign}{\operatorname{sign}}
\newcommand{\St}{\operatorname{St}}
\newcommand{\IZ}{\mathbb{Z}}
\newcommand{\R}{\mathbb{R}}
\newcommand{\Z}{\mathbb{Z}}
\newcommand{\C}{\mathbb{C}}
\newcommand{\U}{{\mathcal U}}
\newcommand{\g}{{\mathfrak{genus}}_G}
\newcommand{\Mod}[1]{\ (\mathrm{mod}\ #1)}
\newcommand{\evv}{\operatorname{ev}}
\newtheorem{defn}{Definition}[section]
\newtheorem{exam}[defn]{Example}
\newtheorem{remark}[defn]{Remark}
\theoremstyle{plain}
\newtheorem{thm}[defn]{Theorem}
\newtheorem{prop}[defn]{Proposition}
\newtheorem{lemma}[defn]{Lemma}
\title{Proper actions and decompositions in equivariant K-theory}
\author{Andr\'es Angel}
\address{Departamento de Matem\'aticas \\ Universidad de Los Andes \\ Cra. 1  No 18a-12 \\  Bogot\'a, Colombia \\ }
\email{ja.angel908@uniandes.edu.co}
\thanks{The research was carried out by the support of the grant (\#INV-2023-162-2835) from the Fondo de Investigaciones de la Facultad de Ciencias de la Universidad de los Andes. The first author acknowledges and thanks the hospitality and financial support provided by the Max Planck Institute for Mathematics in Bonn where part of this paper was  also carried out. }
\author{Edward Becerra}
\address{Departamento de Matem\'aticas\\Universidad Nacional de Colombia\\Cra. 30 cll 45 - Ciudad Universitaria\\ Bogot\'a, Colombia}
\email{esbecerrar@unal.edu.co}
\thanks{}
\author{Mario Vel\'asquez}
\address{Departamento de Matem\'aticas\\Universidad Nacional de Colombia\\Cra. 30 cll 45 - Ciudad Universitaria\\ Bogot\'a, Colombia}
\email{mavelasquezme@unal.edu.co}
\urladdr{https://sites.google.com/site/mavelasquezm/}
\thanks{}
\subjclass[2020]{55N20,19L50,19L47,22D10,22D30}
\date{\today}
\dedicatory{The second author dedicates this paper to Hael and Heset.}
\begin{document}
	\maketitle
	\begin{abstract} In this paper, we study a decomposition of the $G$-equivariant K-theory associated to a proper $G$-space, when $G$ is a Lie group with a compact normal subgroup $A$ acting trivially. Our decomposition is related to the Mackey machine theory under certain hypotheses in order to include the topological counterpart. Our main result is a formula that decomposes the $G$-equivariant K-theory in terms of twistings related to the subgroups of $G/A$. While similar decompositions are known for compact (specifically, finite) Lie groups acting on spaces, our findings are notably more comprehensive, as they apply to a broad range of locally compact groups, including discrete, linear, and almost connected groups, whether compact or not. 
	\end{abstract}

	\section{Introduction}
	Let $G$ be a group; the (complex) representation theory of $G$  has been extensively studied because it encodes many properties associated with $G$.  Unfortunately, the representation theory of $G$ can be difficult to describe in elemental terms directly.  
	 Currently, an interesting approach exists to describe the irreducible representations of a locally compact $G$ in terms of the projective representations of a normal subgroup $A$: the theory known as the \textit{Mackey machine} \cite{Mac}. 
	
	In this paper, we use the ideas behind Mackey machine in order to generalize some of them to the context of finite dimensional $G$-vector bundles over proper $G$-spaces, with $G$ a Lie group and where the normal subgroup $A$ is compact and acts trivially on the base space. This approach results in a natural decomposition in equivariant K-theory. 
	
	More specifically, consider an extension $$1\to A\to G\to Q\to 1,$$ and let $\Irr(A)$ be the space of isomorphism classes of irreducible representations of $A$ (with the discrete topology); there is an action of $G$ (and $Q$) on $\Irr(A)$ given by conjugation (see Section \ref{sectiona}). Let $[\rho]$ be an element of $\Irr(A)$ and $G_{[\rho]}$ (respectively $Q_{[\rho]}$) be the isotropy group of $[\rho]$ under the mentioned action; we construct $S^1$-central extensions 
	$$1\to S^1\to \widetilde{Q}_{[\rho]}\to Q_{[\rho]}\to 0,$$and 	$$1\to S^1\to \widetilde{G}_{[\rho]}\to G_{[\rho]}\to 0,$$ that encode the   obstructions of lifting $\rho$ to a representation of $G_{[\rho]}$ (see Prop. \ref{notwist}). 
	
	Using both extensions, we can decompose a $G$-vector bundle, $E$, over a proper $G$-space, $X$, where $A\subseteq G$ is a compact normal subgroup acting trivially over $X$ as follows:
	
	The bundle $E$ viewed as a $A$-vector bundle can be decomposed into isotypic parts in the following way (see \cite{segal}) :
	
	$$E\cong\bigoplus_{[\rho]\in\Irr(A)}\rho\otimes \Hom_A(\rho,E).$$
	This is an isomorphism of $A$-vector bundles. Now the idea is to recover the $G$-action on the left-hand side from an \textit{ad hoc} $G$-action defined on the right hand side; this $G$ action is defined in two steps:
	
	First, we associate terms in the direct sum on the right-hand side that are related by the $G$-action over $\Irr(A)$, then the above decomposition can be expressed as
	
	$$E\cong\bigoplus_{G[\rho]\in G\setminus \Irr(A)}\left(\bigoplus_{gG_{[\rho]}\in G/G_{[\rho]}}g\cdot\rho\otimes \Hom_A(g\cdot\rho,E)\right).$$
	
	Second, we prove that it is possible to define a $G_{[\rho]}$-action over a specific $\rho\otimes\Hom_A(\rho, E)$, and then the term$$\bigoplus_{gG_{[\rho]}\in G/G_{[\rho]}}g\cdot\rho\otimes\Hom_{A}(g\cdot \rho,E)$$ can be completely recovered with its $G$-action (as an induction) if we know only one of the terms in the sum (with its $G_{[\rho]}$-action). This is the content of Thm. \ref{vectordecomp}. Now if we choose one specific $\rho\otimes\Hom_A(\rho, E)$ it can be recovered (with its $G_{[\rho]}$-action) from $\Hom_A(\rho, E)$ (with an action of some $S^1$-central extension of  $Q_{[\rho]}$). It is the content of Thm. \ref{toe}.
	
	For a Lie group $G$ and a proper $G$-space $X$, where $A$ acts trivially such that proper equivariant K-theory (denoted by $K_G(X)$) can be generated by finite-dimensional $G$-vector bundles (assumption (K)), the above procedure yields a decomposition of $K_G(X)$ in terms of equivariant (with respect to the isotropy groups $Q_{[\rho]}$, see Def. \ref{twisteddef}) twisted K-theory groups of $X$. More specifically, we have the following result:
	
	\begin{thm}\label{decomp}
		Let $G$ be a Lie group satisfying assumption (K), and $X$ be a proper $G$-space  on which the normal subgroup $A$ acts trivially. There exists a natural isomorphism 
		\begin{align*}\Psi_X:K^*_G(X)&\to\bigoplus_{[\rho]\in G\setminus\Irr(A)}{}^{\widetilde{Q}_{[\rho]}}K^*_{Q_{[\rho]}}(X)\\ [E]&\mapsto\bigoplus_{[\rho]\in G\setminus\Irr(A)}[\Hom_A(\rho,E)].\end{align*} This isomorphism is functorial on $G$-maps $X\to Y$ of proper $G$-spaces on which $A$ acts trivially. 
	\end{thm}

	The primary aim of this paper is to prove the aforementioned theorem with such generality. 
	
	
	Our results generalize the decomposition obtained in \cite{GU} for finite groups and in \cite{AGU} for compact Lie groups. For finite groups, the same  decomposition can be found in \cite{dual} using different methods. 
	
On the other hand, we also present some examples and applications of this decomposition. Specifically, we consider the antipodal action of $\Z/2$ on $\mathbb{S}^2$ then, we compute the $Q_8$-equivariant K-theory groups of $\mathbb{S}^2\times\mathbb{S}^2$ where the action is given by the canonical projection $Q_8\to \IZ_2\times\IZ_2$. We also use some computations in \cite{BaVe2016} to compute the equivariant K-theory of the classifying space for proper actions of the Steinberg group $\St(3,\Z)$, these computations correspond to the right-hand side of the Baum-Connes conjecture, which remains unproven for $\St(3,\Z)$. 

We can use the above decomposition to study proper actions of  a Lie group with only one isotropy type and obtain another natural decomposition, this is the content of Thm. \ref{oneiso}. We use that decomposition to study the equivariant K-theory groups of  actions of $SU(2)$ on simply connected 5-dimensional manifolds with only one isotropy type and orbit space $S^2$. It was pointed out to us by J. Rosenberg, that similar results for actions of compact Lie groups on locally compact spaces with only one orbit type were obtained by A. Wassermann in his PhD. Thesis at the University of Pennsylvania \cite{wassermann}, by using a different approach via $C^*$-algebras. 

This paper is organized as follows: Section \ref{sectiona} is devoted to adapt the Mackey machine \cite{Mac} to equivariant vector bundles. In Section \ref{proper$K$-theory}, we prove  Theorem \ref{decomp} describing how any $G$-equivariant vector bundle over a proper $G$-space $X$ can be obtained as the sum of inductions of twisted equivariant (with respect to certain groups) vector bundles over $X$. In Section \ref{sectionb}, we study proper actions of Lie groups with only one isotropy type and apply the decomposition of equivariant K-theory of Section \ref{sectiona}.  In Section \ref{sectionaa}, we apply the induced decomposition of equivariant K-theory to examples coming from the quaternions and the Steinberg group. We apply this decomposition to compute the equivariant K-theory of $5$-dimensional simply connected manifolds with actions of $SU(2)$ and only one isotropy type. Finally, in Appendix A, we present  additional proofs and remarks about the property $(K)$ in order to explain how this property is satisfied for a very general class of topological groups.
	
\section{Mackey machine for equivariant vector bundles and compact normal subgroup} \label{sectiona}
	
In \cite{Mac}, G. W. Mackey presented a technique to describe the unitary representations of a locally compact group, $G$, in terms of the  representations of a closed normal subgroup, $A$. This technique --called \emph{the Mackey machine}-- is a powerful tool when the structure of the unitary representations of $A$ is known. In this paper, we restrict ourselves  
to the case when $A$ is also a compact group because the Mackey machine works particularly well and the representation theory of $A$ is suitable to be applied into the setup of equivariant K-theory. Following this approach, we proceed to explain how the natural action of $G$ on the discrete set of irreducible representations of $A$ implies the existence of twisted vector bundles parametrized by a family of subgroups of $G/A$. In order to obtain useful applications to K-theory, we present here an exposition of how the Mackey machine can be generalized for equivariant vector bundles if the compact normal subgroup $A$ acts trivially on the space $X$. Throughout this section we suppose that $G$ is a Lie group and $A$ is a compact normal subgroup of $G$.
	

	Let $$1 \to A\to G\to Q\to 1$$ be the corresponding extension with $Q:=G/A$. We set $\Irr(A)$ to be the set of unitary equivalence classes of irreducible representations of $A$ endowed with the discrete topology. Let 
	$$
	\rho:A\to U(V_\rho),
	$$
	be an irreducible representation of $A$ on the vector space $V_{\rho}$ which is supposed to have an $A$-invariant metric. Let $g\in G$. We define another irreducible representation of $A$ by\begin{align*}g\cdot\rho:A&\to U(V_\rho)\\a&\mapsto\rho(g^{-1}ag).  \end{align*}resulting in a left action of $G$ over $\Irr(A)$. 
	
	Let $G_{[\rho]}$ be the isotropy group of the isomorphism class of $[\rho] \in \Irr(A)$ under the action of $G$. This means that for each $g \in G_{[\rho]}$ there is a unitary matrix $U_g$ such that, for all $a\in A$
	$$
	\rho(g^{-1}ag) = U_g^{-1}\rho(a)U_g.
	$$
First note that $A\subseteq G_{[\rho]}$. 	For a fixed $[\rho] \in \Irr(A)$ and $x\in A$, $a\cdot\rho(x)=\rho(x^{-1}ax)=\rho(x^{-1})\rho(a)\rho(x)=\rho(a)$ for all $a\in A$. Then taking $U_x=\rho(x)$ we see that $x\in G_{[\rho]}$. Let us define $Q_{[\rho]}=G_{[\rho]}/A$. Therefore, there is an extension:
	$$
	1 \to A \stackrel{\iota  }{\rightarrow} G_{[\rho]} \stackrel{}{\rightarrow} Q_{[\rho]} \rightarrow  1.
	$$
	
	\begin{remark}
		If $A$ is central in $G$, then the action of $G$ on $\Irr(A)$ is trivial, $G_{[\rho]}=G$ and $Q_{[\rho]}=Q$.
	\end{remark}

	For any pair $g,h\in G_{[\rho]}$, and for all $a\in A$:$$U_{(gh)^{-1}}\rho(a)U_{gh}=\textcolor{black}{\rho((gh)^{-1}a(gh))=U_h^{-1}\rho(g^{-1}ag)U_h=}U_{h^{-1}}U_{g^{-1}}\rho(a)U_{g}U_{h}.$$By straightforward calculations, the equation above can be rewritten as:$$U_{gh}U_{h^{-1}}U_{g^{-1}}\rho(a)=\rho(a)U_{gh}U_{h^{-1}}U_{g^{-1}}.$$
	As $\rho$ is supposed to be irreducible, by Schur's lemma $U_{gh}$ must be a multiple of $U_{g}U_{h}$ and therefore there is a well defined homomorphism  $\Upsilon:G_{[\rho]}\to PU(V_\rho)$ that extends $\rho: A \rightarrow U(V_\rho)$. Summarizing, we have the following result.
	\begin{lemma}\label{lema41}
		There is a unique homomorphism $\Upsilon:G_{[\rho]}\to PU(V_\rho)$ such that the following diagram commutes
		$$\xymatrix{A\ar[r]^\iota \ar[d]^\rho&G_{[\rho]}\ar[d]^{\Upsilon}\\U(V_\rho)\ar[r]^p&PU(V_\rho).}$$
	\end{lemma}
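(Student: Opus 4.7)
The plan is to construct $\Upsilon$ explicitly from the unitaries $U_g$ already introduced, and then verify that it is the only homomorphism making the diagram commute. For each $g \in G_{[\rho]}$, the condition $g\cdot [\rho] = [\rho]$ guarantees a unitary $U_g \in U(V_\rho)$ with $U_g\rho(a)U_g^{-1} = \rho(gag^{-1})$ for every $a \in A$. Since $\rho$ is irreducible, Schur's lemma applied to the equation $U_g^{-1}U_g'\rho(a) = \rho(a)U_g^{-1}U_g'$ shows that any two such choices differ by a scalar in $S^1$; hence the class $[U_g] \in PU(V_\rho)$ depends only on $g$. Define $\Upsilon(g) := [U_g]$. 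This gives a well-defined map, and the homomorphism property is precisely the computation carried out in the text just before the lemma, namely that $U_{gh}U_h^{-1}U_g^{-1}$ commutes with $\rho(A)$ and is therefore scalar by Schur. Commutativity of the diagram is immediate upon choosing $U_a = \rho(a)$ for $a \in A$, which satisfies the defining intertwining relation tautologically, so that $\Upsilon(\iota(a)) = [\rho(a)] = p(\rho(a))$.

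For uniqueness, suppose $\Upsilon' \co G_{[\rho]} \to PU(V_\rho)$ is any homomorphism with $\Upsilon'\circ \iota = p\circ\rho$. For each $g \in G_{[\rho]}$ lift $\Upsilon'(g)$ to some $V_g \in U(V_\rho)$. Combining the homomorphism property applied to the identity $g\,a\,g^{-1} \in A$ (for $a \in A$) with the condition $\Upsilon'(a) = p(\rho(a))$ yields
\[
p\bigl(V_g \rho(a) V_g^{-1}\bigr) \;=\; \Upsilon'(g)\,p(\rho(a))\,\Upsilon'(g)^{-1} \;=\; \Upsilon'(gag^{-1}) \;=\; p\bigl(\rho(gag^{-1})\bigr),
\]
so that $V_g \rho(a) V_g^{-1}$ and $\rho(gag^{-1})$ agree up to an $S^1$-scalar. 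A Schur-type argument, again invoking the irreducibility of $\rho$, then forces $V_g$ to be a scalar multiple of $U_g$, giving $\Upsilon'(g) = [V_g] = [U_g] = \Upsilon(g)$.

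The main obstacle is the Schur step in the uniqueness argument: one must upgrade the relation $V_g \rho(a) V_g^{-1} = \lambda(g,a)\rho(gag^{-1})$ with $\lambda(g,a) \in S^1$ to the statement that $\lambda(g,\cdot) \equiv 1$ on $A$, so that $V_g$ genuinely satisfies the same intertwining relation as $U_g$ and may be compared with it via Schur. This amounts to showing that the character-valued correction $\lambda(g,\cdot)$ cannot intertwine $\rho$ with a nontrivial twist of itself by a unitary, which is exactly where irreducibility of $\rho$ is brought to bear.
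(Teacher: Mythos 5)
Your existence argument is correct and is exactly the paper's: Schur's lemma makes the class $[U_g]\in PU(V_\rho)$ independent of the choice of intertwiner, multiplicativity holds because $U_gU_h$ and $U_{gh}$ satisfy the same intertwining relation for $gh$, and taking $U_a=\rho(a)$ gives commutativity of the square. The problem is the uniqueness half, and you have correctly located the soft spot yourself but not closed it: from $\Upsilon'\circ\iota=p\circ\rho$ and multiplicativity you only obtain $V_g\rho(a)V_g^{-1}=\lambda(g,a)\,\rho(gag^{-1})$ with $\lambda(g,a)\in S^1$. The map $a\mapsto\lambda(g,a)$ is a character of $A$, and the relation says $V_g$ intertwines $\rho$ with $\lambda(g,\cdot)\otimes(g\cdot\rho)$. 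Irreducibility of $\rho$ does \emph{not} force such a character to be trivial, since an irreducible representation can absorb a nontrivial character ($\chi\otimes\rho\cong\rho$ with $\chi\neq 1$; the $2$-dimensional irreducible representation of $Q_8$ absorbs all three nontrivial characters). So the deferred ``Schur-type argument'' cannot be completed in general, and this is a genuine gap, not a routine verification.

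In fact literal uniqueness, as the lemma is phrased, fails. Take $G=Q_8\times\Z/2$, $A=Q_8\times\{1\}$, and $\rho$ the $2$-dimensional irreducible representation of $Q_8$; then $G_{[\rho]}=G$, the image $p(\rho(Q_8))\subseteq PU(2)\cong SO(3)$ is a Klein four-group equal to its own centralizer in $SO(3)$, and both $(a,\epsilon)\mapsto[\rho(a)]$ and $(a,\epsilon)\mapsto[\rho(a)]\,[\rho(i)]^{(1-\epsilon)/2}$ are homomorphisms restricting to $p\circ\rho$ on $A$, yet they disagree at $(1,-1)$. What is true --- and what the paper's pre-lemma discussion actually establishes and later uses to build $\widetilde{G}_{[\rho]}$ --- is that there is a unique homomorphism $\Upsilon$ whose value at each $g$ is the class of a \emph{genuine} intertwiner, i.e.\ of a unitary $U_g$ with $U_g\rho(a)U_g^{-1}=\rho(gag^{-1})$ on the nose (equivalently, $[U_g]$ is independent of the choice of $U_g$). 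Your first paragraph proves exactly that; the second paragraph should either be dropped or recast as uniqueness subject to this stronger intertwining condition, under which your argument does go through because $V_g$ and $U_g$ then satisfy the identical relation and Schur applies directly.
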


	Recall the canonical central extension:
	\[
	1  \to S^{1} \stackrel{i}{\rightarrow} U(V_{\rho})\stackrel{p}{\rightarrow} PU(V_{\rho})\to 1
	\]
	where $PU(V_\rho) = U(V_\rho))/S^1$ and $p(u)$ is the conjugation by $u$.

	Define the Lie group  $\widetilde{G}_{[\rho]} : = G_{[\rho]} \times_{PU(V_\rho)} U(V_\rho) = \{ (g,u) \mid {\Upsilon}(g) = p(u)\}$. Note that $\ker(\pi_1)=1\times S^1\subseteq G_{[\rho]} \times U(V_\rho)$. Then there is a central extension
	\[
	1 \to S^{1} \stackrel{i}{\rightarrow} \widetilde{G}_{[\rho]} \stackrel{\pi_1}{\rightarrow}  G_{[\rho]} \to 1 
	.\]

	There is a commutative diagram of $S^1$-central extensions of Lie groups
	$$\xymatrix{1\ar[r]&S^1\ar[r]\ar[d]^{id}&\widetilde{G}_{[\rho]}\ar[r]^{\pi_1} \ar[d]^{\widetilde{\Upsilon}}&G_{[\rho]}\ar[r]\ar[d]^{\Upsilon}&1\\1\ar[r]&S^1\ar[r]&U(V_\rho)\ar[r]&PU(V_\rho)\ar[r]&1}$$
	
	Where $\widetilde{\Upsilon}$ is the projection $\pi_2:\widetilde{G}_{[\rho]}\to U(V_{\rho})$. 
	The homomorphism $ \iota \times \rho :  A \rightarrow G_{[\rho]} \times U(V_\rho)$ has the image contained in $\widetilde{G}_{[\rho]}$. Let us see that $(\iota \times \rho) (A) $ is normal in $\widetilde{G}_{[\rho]}$. To prove this we need to check that if $a \in A$ and $(g,u)  \in G_{[\rho]} \times_{PU(V_\rho)} U(V_\rho) $ then 
	$$
	(g,u) (\iota \times \rho ) (a)  (g,u)^{-1} \in (\iota \times \rho) (A). 
	$$
	We have
	$$
	(g,u) (\iota \times \rho ) (a)  (g,u)^{-1} = ( gag^{-1},u \rho(a) u^{-1}).
	$$
	Since $(g,u) \in G_{[\rho]} \times_{PU(V_\rho)} U(V_\rho)$, we know that $\Upsilon(g) = p(u)$, i.e 
	$$
	\rho(gag^{-1} ) = u \rho(a) u^{-1}
	$$
	and therefore
	$$
	(g,u) (\iota \times \rho ) (a)  (g,u)^{-1} =  (\iota \times \rho ) ( gag^{-1})
	.$$This proves that $(\iota\times\rho)(A)$ is normal in $\widetilde{G}_{[\rho]}$.

	Thus the quotient $\widetilde{G}_{[\rho]}/ (\iota \times \rho) (A)$
	is a Lie group. We denote this Lie group by $\widetilde{Q}_{[\rho]}$ and there is an exact sequence
	$$
	1 \to A \stackrel{\iota \times \rho }{\rightarrow} \widetilde{G}_{[\rho]} \stackrel{}{\rightarrow} \widetilde{Q}_{[\rho]} \rightarrow  1.
	$$
	
	We want to see now that there is a central extension
	\begin{equation} \label{centralexquot}
	1 \to S^{1} \stackrel{\widetilde{\iota}}{\rightarrow} \widetilde{Q}_{[\rho]} \xrightarrow{\widetilde{\pi}} Q_{[\rho]} \to 1
	\end{equation}
	
	where $\widetilde{\iota} : S^1 \rightarrow \widetilde{G}_{[\rho]}/ (\iota \times \rho) (A)$ is induced by  $1 \times S^1 \subseteq G_{[\rho]} \times_{PU(V_\rho)} U(V_\rho)$.

	Note that $ (\{e \} \times S^1) \cap  (\iota \times \rho) (A) = \{(e,id_{U(V_\rho)} )\}$ and therefore 
	$$
	S^{1} \stackrel{i}{\rightarrow} \widetilde{G}_{[\rho]} \rightarrow \widetilde{G}_{[\rho]}/ (\iota \times \rho) (A)
	$$
	is injective and gives a map $\widetilde{\iota} : S^1 \rightarrow \widetilde{G}_{[\rho]}/ (\iota \times \rho) (A)=\widetilde{Q}_{[\rho]} $. On the other hand the map $\widetilde{\pi}$ is defined as $(g,u)(\iota\times\rho)(A)\mapsto gA$. This map is well defined and its kernel is precisely the image of $\widetilde{\iota}$. Then we have the desired exact sequence.
	
	
	$$
	$$

	There is a commutative diagram of extensions of Lie groups
	$$\xymatrix{1 \ar[r]&A\ar[r]\ar[d]^{id}& \widetilde{G}_{[\rho]}\ar[r] \ar[d]^{}& \widetilde{Q}_{[\rho]} \ar[r]\ar[d]& 1 \\ 1 
		\ar[r]&A\ar[r] &G_{[\rho]}\ar[r]^{\pi_1} &Q_{[\rho]} \ar[r] & 1 
	}$$
	and also a commutative diagram of $S^1$-central extensions of Lie groups
	$$\xymatrix{1 \ar[r]&S^1\ar[r]\ar[d]^{id}&\widetilde{G}_{[\rho]}\ar[r]^{\pi_1} \ar[d]^{}&G_{[\rho]}\ar[r]\ar[d]& 1 \\
		1\ar[r] &  S^{1}  \ar[r]^{\widetilde{i}} &  \widetilde{Q}_{[\rho]} \ar[r] & Q_{[\rho]} \ar[r] & 1.
	}$$
	
	We will now examine the conditions under which these extensions are trivial.
	
	\begin{remark}\label{abelian-kernel}
		If $\rho$ is a $1$-dimensional irreducible representation of $A$, then the $S^1$-extension 
		\[
		1 \to S^{1} \stackrel{i}{\rightarrow} \widetilde{G}_{[\rho]} \stackrel{\pi_1}{\rightarrow}  G_{[\rho]} \to 1, \]
		is trivial. This is because in this case $PU(V_\rho)=1$.

	\end{remark}
	
	Exactly as in \cite{AGU}, we have:
	
	\begin{prop} \label{notwist}
		The irreducible representation $\rho$ can be extended to $G_{[\rho]}$ if, and only if, the $S^1$-central extension of Lie groups 
		$$1 \to S^1\to \widetilde{Q}_{[\rho]}\to Q_{[\rho]} \to 1$$
		is trivial.
	\end{prop}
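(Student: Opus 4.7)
The plan is to establish a bijection between extensions $\tilde\rho\co G_{[\rho]}\to U(V_\rho)$ of $\rho$ and continuous group-theoretic splittings of the $S^1$-central extension $1\to S^1\to\widetilde{Q}_{[\rho]}\to Q_{[\rho]}\to 1$. The crucial structural observation underlying both directions is that the two commutative diagrams assembled just before the proposition exhibit $\widetilde{G}_{[\rho]}$ as the pullback $G_{[\rho]}\times_{Q_{[\rho]}}\widetilde{Q}_{[\rho]}$: the kernels $S^1$ and $(\iota\times\rho)(A)$ of the two projections out of $\widetilde{G}_{[\rho]}$ intersect trivially and together account for the whole kernel of the composite $\widetilde{G}_{[\rho]}\to Q_{[\rho]}$.

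First I would translate extendability into a section problem for $\pi_1\co\widetilde{G}_{[\rho]}\to G_{[\rho]}$. If $\tilde\rho$ extends $\rho$, then for each $g\in G_{[\rho]}$ and $a\in A$ one has $\tilde\rho(g)\rho(g^{-1}ag)\tilde\rho(g)^{-1}=\rho(a)$, so $\tilde\rho(g)$ intertwines $g\cdot\rho$ with $\rho$ in the same way $U_g$ does. By Schur's lemma applied to the irreducible representation $\rho$, the product $\tilde\rho(g)U_g^{-1}$ is a scalar in $S^1$, hence $p(\tilde\rho(g))=\Upsilon(g)$. Consequently $\sigma(g):=(g,\tilde\rho(g))$ lies in $\widetilde{G}_{[\rho]}$ and defines a continuous homomorphism splitting $\pi_1$ with $\sigma|_A=\iota\times\rho$. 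Conversely, reading off the second coordinate of any such section yields an extension of $\rho$.

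Using the pullback identification I would then match these sections with splittings $\tau\co Q_{[\rho]}\to\widetilde{Q}_{[\rho]}$. Given $\sigma$ as above, composing with the quotient $\widetilde{G}_{[\rho]}\to\widetilde{Q}_{[\rho]}$ kills $A$ and hence factors through $G_{[\rho]}\to Q_{[\rho]}$ to produce a splitting $\tau$. Conversely, a splitting $\tau$ determines, via the pullback property, a unique section $\sigma\co G_{[\rho]}\to\widetilde{G}_{[\rho]}$ whose image in $\widetilde{Q}_{[\rho]}$ equals the pullback of $\tau$; for $a\in A$ uniqueness in the pullback forces $\sigma(a)$ to be the sole element of $\widetilde{G}_{[\rho]}$ lying over the pair $(a,e_{\widetilde{Q}_{[\rho]}})$, which is precisely $(\iota\times\rho)(a)$.

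The main obstacle will be the careful bookkeeping around the pullback identification, making sure that homomorphism properties (not merely set-theoretic compatibilities) transfer along both directions of the correspondence, and that the various quotient and inclusion maps in the two diagrams are used consistently. Once this is verified, Schur's lemma supplies all the representation-theoretic input required to recognize a section of $\pi_1$ that restricts to $\iota\times\rho$ on $A$ as an honest extension of the irreducible representation $\rho$.
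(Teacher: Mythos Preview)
Your proposal is correct and follows essentially the same strategy as the paper's proof: both directions pass through the equivalence between extensions $\tilde\rho$ of $\rho$ and sections $\sigma(g)=(g,\tilde\rho(g))$ of $\pi_1\co\widetilde{G}_{[\rho]}\to G_{[\rho]}$, and then relate such sections to splittings of the $\widetilde{Q}_{[\rho]}$-extension. Your explicit use of the pullback identification $\widetilde{G}_{[\rho]}\cong G_{[\rho]}\times_{Q_{[\rho]}}\widetilde{Q}_{[\rho]}$ is in fact a bit more careful than the paper: the paper asserts that an arbitrary section $\sigma$ of $\pi_1$ satisfies $\sigma(\iota(a))=(\iota\times\rho)(a)$, which is only guaranteed when $\sigma$ is the section induced via pullback from a splitting of $\widetilde{Q}_{[\rho]}\to Q_{[\rho]}$---precisely the point you isolate.
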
\begin{proof}
        Suppose the above central extension is trivial; consequently, we have a section $$\tilde{s}:Q_{[\rho]}\to \widetilde{Q}_{[\rho]}.$$Let $g\in G_{[\rho]}$ and  $\tilde{s}(gA)=(g',u)(\iota\times \rho)(A)$ be the image of the class $gA$. First, observe that $g'A=gA$. We may then assume that $(g',u)(\iota\times \rho)(A)=(g,u_{g})(\iota\times \rho)(A)$, with $u_g$ uniquely determined by $g$. Let us define a section \begin{align*}s: G_{[\rho]}&\to \widetilde{G}_{[\rho]}\\
  g&\to (g,u_g).
  \end{align*} Then $$1\to S^1\to\widetilde{G}_{[\rho]}\to G_{[\rho]}\to1$$ is also trivial.  Since $\widetilde{G}_{[\rho]}\cong G_{[\rho]}\times S^1$ as Lie groups, it means that there is a Lie group homomorphism $\sigma:G_{[\rho]}\to\widetilde{G}_{[\rho]}$ that is a right inverse of the quotient map. Let $\widetilde{\Upsilon}:\widetilde{G}_{[\rho]}\to U(V_\rho)$ be the associated representation to ${\Upsilon}:G_{[\rho]}\to PU(V_\rho)$. $\widetilde{\Upsilon}\circ\sigma:G_{[\rho]}\to U(V_\rho)$ is an extension of $\rho$ because for $a\in A$\begin{align*}
		\widetilde{\Upsilon}(\sigma(i(a)))&=\widetilde{\Upsilon}(\widetilde{i}(a))\\ &=\rho(a),
		\end{align*}
		where the last equality follows by the definition of $\widetilde{i}$.
		
		On the other hand, if $\widetilde{\rho}:G_{[\rho]}\to U(V_\rho)$ is an extension of $\rho$, we can define the morphism ${\Upsilon}$ in Lemma \ref{lema41} explicitely by $\pi\circ\widetilde{\rho}:G_{[\rho]}\to PU(V_\rho)$. Recall that $$\widetilde{G}_{[\rho]}=\{(g,\chi)\in G_{[\rho]}\times U(V_\rho)\mid \pi(\widetilde{\rho}(g))=\pi(\chi)\}.$$ Define \begin{align*}\sigma:G_{[\rho]}&\to\widetilde{G}_{[\rho]}\\g&\mapsto(g,\widetilde{\rho}(g)).\end{align*}
		Then $\widetilde{G}_{[\rho]}\cong G_{[\rho]}\times S^1$ as Lie groups. It implies that $\widetilde{Q}_{[\rho]}\cong Q_{[\rho]}\times S^1$ as Lie groups and thus the extension is trivial.
	\end{proof}
	Note that the $S^1$-central does only depend on the class of isomorphism $[\rho]$, then we can state the following:
	\begin{defn}
	    Let $\rho$ be an irreducible representation of $A$. We refer to the sequence in Proposition \ref{notwist} as the \emph{twisting} associated to $\rho$.
	\end{defn}

	\subsubsection{Semidirect products with abelian normal subgroup}
	
	Now we will study the case of a semidirect product. As it is usual in representation theory, for a  group $G$, we denote by $$\widehat{G}=\{f:G\to \mathbb{C}^\times \mid f\text{ homomorphism }\},$$ the space of characters of $G$ (with the compact open topology).
	
	Let $A \mathrel{\unlhd} G$ be a normal subgroup of $G$. Then the group $G$ acts on $\widehat{A}$ by taking a character $\chi$ of $A$, and defining $(g \cdot \chi)(a) = \chi(g^{-1}ag)$ where $g\in G$. For a character $\gamma \in \widehat{G}$, consider its restriction to the subgroup $A\subset G$ as an element $\gamma|_A\in \widehat{A}$. If $\chi=\gamma|_A$ for some $\gamma\in\widehat{G}$, since $\C^\times$ is abelian, for every $g\in G$, $g \cdot \chi =\chi$. Thus the restriction homomorphism $\widehat{G} \rightarrow \widehat{A}$ has its image in  $$\widehat{A}^G=\{\alpha\in\widehat{A}\mid g\cdot \alpha=\alpha\},$$ the subgroup of fixed points by the action of $G$.
	\begin{prop}\label{propoaux}
		If $G = A \rtimes Q$, then there is a group isomorphism
		$$
		\Gamma: \widehat{G} \rightarrow {\widehat{A}}^Q \times \widehat{Q},
		$$given by restriction.
			\end{prop}
		\begin{proof}
			$\Gamma$ is injective because it is the restriction to both $A$ and $Q$, and because $G=AQ$. To see that it is surjective, take $\alpha \in {\widehat{A}}^Q $ and $\beta \in \widehat{Q}$ and consider the product $\alpha \rtimes \beta$ defined for $g\in G$ as follows: there is a unique pair $(a,q)\in A\times Q$ such that $g=aq$; define  $\alpha\rtimes\beta(g)=\alpha(a)\beta(q)$. To see this is a character, note:
			$$
			\begin{aligned} 
   a_1 q_1 a_2 q_2&=a_1 q_1 a_2 q_1 ^ { - 1 } q_1 q_2\\
   \alpha \left( a_1 q_1 a_2 q_1^ { - 1 } \right)&=\alpha ( a_1 ) \alpha \left( q_1 a_2 q_1 ^ { - 1 } \right)\\
   &=\alpha ( a_1 ) \alpha \left( a_2 \right)
			\end{aligned}
			$$Clearly $\Gamma(\alpha\rtimes\beta)=(\alpha,\beta)$.
		\end{proof}
	
	\begin{prop}
		Let $A$ be an abelian group and $G = A \rtimes Q$. Then for every irreducible representation $\chi$ of $A$, the $S^1$-central extension of Lie groups 
		$$1 \to S^1\to \widetilde{Q}_{\chi}\to Q_{\chi} \to 1$$
		is trivial.\end{prop}
		\begin{proof}We want to apply Proposition \ref{notwist}.
			Suppose $A$ is abelian, and take $\chi \in \widehat{A}$. For $g\in G_{\chi}$ 
			$$
			\chi(g^{-1}ag) =U_g^{-1}\chi(a)U_g = \chi(a)
			$$
			since $U_g$, $\chi(a)$ and $U_{g}^{-1}$ are elements of  $S^1$; therefore every character of $A$ is invariant under $G_{\chi}$. Also, $G_{\chi} = A \rtimes Q_{\chi}$ since clearly $G_{\chi} \supseteq A \rtimes Q_{\chi}$ and for $(\widetilde{a},q) \in G_{\chi}$
			$$
			\chi(a) = \chi((\widetilde{a}q)^{-1}a (\widetilde{a}q)) = \chi(q^{-1}\widetilde{a}^{-1}a \widetilde{a}q) = \chi(q^{-1}aq)
			$$
			and therefore $q \in Q_{\chi}$.
			
			Taking the trivial character of $Q_{\chi}$ gives in a character of $\widehat{A}^{G_\chi}\rtimes \widehat{Q}_\chi=\widehat{A}\rtimes \widehat{Q}_\chi$ and then, by proposition \ref{propoaux}, we have a character of $G_{\chi}$ that extends $\chi$, therefore by  Proposition \ref{notwist} we are done.

		\end{proof}
	
	Next, we describe the Mackey machine for equivariant vector bundles when the normal subgroup is compact.

	\subsection{Decomposition of $G$-vector bundles}
	Let $E$ be a $G$-vector bundle over a $G$-space $X$, and let $A$ be a compact normal subgroup of $G$ acting trivially on $X$. Since $A$ is compact we can assume that $E$ is endowed with an $A$-invariant Hermitian metric. Each fiber is therefore a unitary representation of $A$. By a result in \cite{segal} we know that the restriction $E\mid_A$ considered as an $A$-vector bundle can be decomposed into isotypic parts  $$E\mid_A \quad \cong\bigoplus_{[\rho]\in \Irr(A)}  \rho \otimes \Hom_A(\rho,E),$$where $\rho$ is considered as the trivial $A$-bundle $X \times V_\rho \rightarrow X$ and $\Hom_A(E_1,E_2)$ is the bundle of $A$-equivariant maps between two $A$-bundles $E_1,E_2$ endowed with the trivial $A$-action. Let $E_\rho=\rho\otimes\Hom_A(\rho,E)$ and call it the \emph{$\rho$-isotypic part of $E$}.
	\begin{defn}
		Let $X$ be a $G$-space, where the action of $A$ is trivial and $\rho:A\to U(V)$ is an irreducible $A$-representation. A $G_{[\rho]}$-vector bundle $p:E\to X$
		is $(G_{[\rho]},\rho)$-isotypic if each fiber $E_x$ is an $A$-representation isomorphic to an integer multiple of $\rho$.
	\end{defn}
	
	
	Another way to say that each fiber $E_x$ is an $A$-representation isomorphic to a multiple of $\rho$ is that the map
	\begin{align*}
	\beta:E_\rho&\to E\\
	v\otimes f&\mapsto f(v)
	\end{align*}
	is an isomorphism of $A$-vector bundles.\\
	
	As in Thm. 2.3 in \cite{AGU}, $(G_{[\rho]},\rho)$-isotypic vector bundles are in correspondence with $\widetilde{Q}_{[\rho]}$-vector bundles on which $S^1$-acts by multiplication by inverses. The  proof of \cite{AGU} did not depend on the compactness of $G$. We include it below for completeness.
	
	\begin{thm}\label{toe}
		Let $X$ be a $G$-space where $A$ acts trivially on $X$.  There is a natural one-to-one correspondence between isomorphism classes of $(G_{[\rho]},\rho)$-isotypic vector bundles over $X$ and isomorphism classes of $\widetilde{Q}_{[\rho]}$-equivariant vector bundles over $X$ on which $S^1$-acts by multiplication by inverses. 
		
	\end{thm}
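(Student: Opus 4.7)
The plan is to construct the correspondence as a pair of functors that are mutually inverse up to natural isomorphism, so that in particular they induce a bijection on isomorphism classes.

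In one direction, given a $(G_{[\rho]},\rho)$-isotypical bundle $p\colon E\to X$, I set $F:=\Hom_A(\rho,E)$. By definition of $\widetilde{G}_{[\rho]}$, an element $(g,u)\in\widetilde{G}_{[\rho]}\subseteq G_{[\rho]}\times U(V_\rho)$ satisfies $\Upsilon(g)=p(u)$, i.e.\ $\rho(gag^{-1})=u\rho(a)u^{-1}$ for all $a\in A$. This lets me define a $\widetilde{G}_{[\rho]}$-action on $F$ by
$$\bigl((g,u)\cdot f\bigr)(v):=g\cdot f(u^{-1}v),\qquad v\in V_\rho,$$
and I check $A$-equivariance of $(g,u)\cdot f$ using the compatibility condition above. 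Next I verify that the normal subgroup $(\iota\times\rho)(A)\subseteq\widetilde{G}_{[\rho]}$ acts trivially on $F$: for $a\in A$, $((\iota(a),\rho(a))\cdot f)(v)=a\cdot f(\rho(a)^{-1}v)=f(v)$ because $f$ is $A$-equivariant and $A$ acts trivially on $X$. Hence the action descends to a $\widetilde{Q}_{[\rho]}$-action, and finally the central $S^1$ acts by $(\lambda\cdot f)(v)=f(\lambda^{-1}v)=\lambda^{-1}f(v)$, i.e.\ by multiplication by inverses.

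In the other direction, given a $\widetilde{Q}_{[\rho]}$-bundle $F\to X$ on which $S^1$ acts by multiplication by inverses, I pull it back through $\widetilde{G}_{[\rho]}\to\widetilde{Q}_{[\rho]}$ and form $E:=\rho\otimes F$, where $\rho$ denotes the trivial bundle $X\times V_\rho\to X$ with the obvious $\widetilde{G}_{[\rho]}$-action through the second projection $(g,u)\mapsto u$. Explicitly
$$(g,u)\cdot(v\otimes f):=uv\otimes\bigl((g,u)\cdot f\bigr).$$
I then check that the central $S^1$ acts trivially on $E$, because $\lambda\cdot(v\otimes f)=\lambda v\otimes\lambda^{-1}f=v\otimes f$, so the action descends to $\widetilde{G}_{[\rho]}/S^1$; combined with the fact that $A\hookrightarrow\widetilde{G}_{[\rho]}$ via $a\mapsto(\iota(a),\rho(a))$ acts on a fiber as $\rho(a)\otimes\mathrm{id}$, the composite action descends to $G_{[\rho]}$ and makes $E$ a $(G_{[\rho]},\rho)$-isotypical bundle (each fiber is $V_\rho\otimes F_x$, a multiple of $\rho$).

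It remains to produce natural isomorphisms $E\cong\rho\otimes\Hom_A(\rho,E)$ and $F\cong\Hom_A(\rho,\rho\otimes F)$ that are compatible with the constructed actions. The first is the canonical evaluation isomorphism $\beta$ already written down, and its $G_{[\rho]}$-equivariance follows immediately from the formulas above. The second sends $f\in F_x$ to the map $v\mapsto v\otimes f$, with inverse given by contracting a highest-weight vector; the equivariance check reduces to a direct substitution in the definitions. The main obstacle is the bookkeeping needed to verify that the prescribed $\widetilde{G}_{[\rho]}$-action descends cleanly to the relevant quotient on both sides, which is where the defining condition $\rho(gag^{-1})=u\rho(a)u^{-1}$ of $\widetilde{G}_{[\rho]}$ and the inverse-multiplication condition on the $S^1$-action play complementary roles; everything else is functorial.
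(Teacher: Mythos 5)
Your proposal is correct and follows essentially the same route as the paper: the same two constructions ($E\mapsto\Hom_A(\rho,E)$ with the action $((g,u)\cdot f)(v)=g\cdot f(u^{-1}v)$, and $F\mapsto\rho\otimes F$ with $(g,u)\cdot(v\otimes f)=uv\otimes((g,u)\cdot f)$), the same descent checks through $(\iota\times\rho)(A)$ and the central $S^1$, and the same mutually inverse isomorphisms (evaluation, and $f\mapsto(v\mapsto v\otimes f)$). The only quibble is the phrase ``contracting a highest-weight vector'': the inverse of the second map is really just a consequence of Schur's lemma, $\Hom_A(\rho,\rho\otimes F)\cong\Hom_A(\rho,\rho)\otimes F\cong F$, but this does not affect the validity of the argument.
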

	\begin{proof}
		Suppose that $p:E\to X$ is a $(G_{[\rho]},\rho)$-isotypic vector bundle over $X$. Then $\Hom_A(\rho,E)$ is a nonzero complex vector bundle. 
		
		 We keep the notation in Lemma \ref{lema41} and endow $\Hom_A(\rho,E)$  with a linear $\widetilde{G}_{[\rho]}$-action in which $\pi_1^{-1}(A)$ acts trivially in the following way.
		Note that $\widetilde{G}_{[\rho]}$ acts on $\rho$ via the map $\widetilde{\Upsilon}:\widetilde{G}_{[\rho]}\to U(V_\rho)$  and also acts on $E$ via the natural projection $\pi_1:\widetilde{G}_{[\rho]}\to G_{[\rho]}$, then $\widetilde{G}_{[\rho]}$ acts in a natural way on $\Hom_A(\rho,E)$ by conjugation. More explicitly, let $\widetilde{g}\in \widetilde{G}_{[\rho]}$, $\varphi\in \Hom_A(\rho,E)$ and $v\in\rho$, the action is defined as
		$$(\widetilde{g}\bullet\phi)(v)=\pi_1(\widetilde{g})\phi\left(\left(\widetilde{\Upsilon}\left(\widetilde{g}\right)\right)^{-1}v\right)$$  This action is continuous. Moreover as we are considering $A$-equivariant homomorphisms it is clear that $\pi_1^{-1}(A)$ acts trivially, thus we get an action of $\widetilde{Q}_{[\rho]}$.		
		
		From the definition of the action it is clear that $S^1=\ker(\pi_1)$ acts by multiplication by  inverses.
		
		
		Let us consider the transformation$$[E]\mapsto[\Hom_A(\rho,E)]$$from the set isomorphism classes of $(G_{[\rho]},\rho)$-isotypic vector bundles over $X$ to the set of isomorphism classes of $\widetilde{Q}_{[\rho]}$-equivariant vector bundles over $X$  for which $S^1$-acts by multiplication by inverses. We will see that this transformation is a bijective correspondence. For this we will define an inverse.
		
		Let $p:F\to X$ be a $\widetilde{Q}_{[\rho]}$-vector bundle on which $S^1$-acts by multiplication by its inverses. Consider the vector bundle $\rho\otimes F$.
		
	Note that $\widetilde{G}_{[\rho]}$ acts on $F$ via the quotient map and acts on $\rho$ via the map $\Upsilon$, then $\widetilde{G}_{[\rho]}$ acts on the tensor $\rho\otimes F$. More explicitly, let $\widetilde{g}\in \widetilde{G}_{[\rho]}$ and $v\otimes f\in\rho\otimes F$
		
		$$\widetilde{g}\cdot(v\otimes f)=\left(\widetilde{\Upsilon}\left(\widetilde{g}\right)v\right)\otimes(\widetilde{g}\tau^{-1}(A)\cdot f)$$ where $\widetilde{g}\pi_1^{-1}(A)$ is the coset corresponding to $\widetilde{g}$ in $\widetilde{Q}_{[\rho]}.$ This action is continuous. Moreover, if $\lambda\in S^1=\ker(\tau)$,
		$$\lambda\cdot(v\otimes f)=\left(\widetilde{\Upsilon}(\lambda)v\right)\otimes\left(\left(\lambda\pi_1^{-1}(A)\right)\cdot f\right)=\lambda v\otimes\lambda^{-1}f=v\otimes f.$$
		Thus $\ker(\pi_1)$ acts trivially, and then we have a well defined action of $G_{[\rho]}=\widetilde{G}_{[\rho]}/S^1$.
		
		Now we will see that  $\rho\otimes F$ is a $(G_{[\rho]},\rho)$-isotypic vector bundle. 
		
		Let $a\in A$, and let $\widetilde{a}\in \pi_1^{-1}(a)$,
		$$a\cdot(v\otimes f)=\widetilde{a}(v\otimes f)=\left(\widetilde{\Upsilon}\left(\widetilde{a}\right)v\right)\otimes\left(\widetilde{a}\pi_1^{-1}(A)\cdot f\right)=\rho(a)v\otimes f.$$
		This implies that $\rho\otimes F$ is a $(G_{[\rho]},\rho)$-isotypic vector bundle.
		
		Now consider the transformation
		
		$$[F]\mapsto[\rho\otimes F]$$
		from the set of isomorphism classes of $\widetilde{Q}_{[\rho]}$-equivariant vector bundles over $X$  for which $S^1$ acts by multiplication by inverses to the set of isomorphism classes of  $(G_{[\rho]},\rho)$-isotypic vector bundles over $X$. We will see that this transformation is the inverse of the transformation defined previously.
		
		
		Note that \begin{align*}
		\operatorname{ev}:\rho\otimes\Hom_A(\rho,E)&\to E\\v\otimes\phi&\mapsto\phi(v)
		\end{align*}
		is an isomorphism of vector bundles. We will see that $\evv$ is $G_{[\rho]}$-equivariant.
		
		Let $g\in G_{[\rho]}$ and $\widetilde{g}\in\widetilde{G}_{[\rho]}$ with $\pi_1(\widetilde{g})=g$.
		\begin{align*}
		\evv(g\cdot(v\otimes\phi))&=\evv\left(\widetilde{g}\cdot(v\otimes\phi\right))\\&=\evv\left(\left(\widetilde{\Upsilon}(\widetilde{g})v\right)\otimes\left(\widetilde{g}\bullet\phi\right)\right)\\&=\left(\widetilde{g}\bullet\phi\right)\left(\widetilde{\Upsilon}\left(\widetilde{g}\right)v\right)\\&=g\phi\left(\widetilde{\Upsilon}\left(\widetilde{g}\right)^{-1}\widetilde{\Upsilon}\left(\widetilde{g}\right)v\right)\\&=g\phi(v).
		\end{align*}
		Then $\rho\otimes\Hom_A(\rho,E)\cong E$ as $(G_{[\rho]},\rho)$-isotypic vector bundles.
		
		Finally let us prove that  $F\cong\Hom_A(\rho,\rho\otimes F)$ as $\widetilde{Q}_{[\rho]}$-equivariant vector bundles over $X$  for which $S^1$-acts by multiplication by inverses.
		
		As $A$ acts trivially on $F$ it is the case that \begin{align*}
		F&\to\Hom_A(\rho,\rho\otimes F)\\f&\mapsto (v\mapsto v\otimes f)
		\end{align*}
		is an isomorphism of $\widetilde{Q}_{[\rho]}$-vector bundles. Therefore we have a bijective corespondence and the results follows.
	\end{proof}
	
	Now using the above theorem we will decompose every $G$-vector bundle over an $A$-trivial $G$-space in terms of equivariant $A$-vector bundles, which turn out to be twisted $Q_{[\rho]}$-equivariant vector bundles (see Theorem \ref{decomp} below). To do that, we need to recall the construction of the induced vector bundle.
	
	\subsection{Induction}
	
	Let $H\subseteq G$ be a subgroup of $G$ of finite index and $X$ be a $H$-space, $G$ is an $H$-space via the right multiplication then the orbit space $$G\times_HX=(G\times X)/H,$$is naturally a $G$-space. The $G$-action is defined as $$g\cdot [g',x]=[gg',x]\text{ for every }g,g'\in G\text{ and  every } x\in X.$$Suppose that $X$ is a $G$-space, then the map \begin{align*}
	    (G\times X)/H&\xrightarrow{\eta} G/H\times X\\
     [g,x]&\mapsto (gH,gx)
	\end{align*} is a $G$-equivariant homeomorphism. 
 
 \begin{defn}
     Let $E\xrightarrow{\pi} X$ be a $H$-vector bundle over a $G$-space $X$. The map 
 $$G\times_H E\xrightarrow{id\times \pi}G\times_H X$$ is naturally a $G$-vector bundle over $G\times_HX$, the induction from $H$ to $G$ of $E$ is the $G$-vector bundle over $X$ 
 \begin{align*}
    \Ind_H^G(E)=G\times_H E\xrightarrow{(id\times\pi)\circ\eta\circ\pi_2}X
 \end{align*}
 
 \end{defn}
 
	
	
 The main  property of $\Ind_H^G(E)$ is the following theorem.
	
	\begin{thm}[Frobenius reciprocity] \label{inductionprop}
			Let $G$ be a Lie group, and let $H\subseteq G$ be a subgroup of finite index. Let $X$ be a $G$-space, and let $E$ be an $H$-vector bundle over $X$. There is a unique  $G$-vector bundle $\Ind_H^G(E)$ over $X$, up to isomorphism of $G$-vector bundles, such that for every $G$-vector bundle $F$ over $X$ we have a natural identification$$\Hom_G(\Ind_H^G(E),F)\cong\Hom_H(E,F).$$
	\end{thm}
	\begin{proof} The argument in Thm. 2.9 in \cite{CRV} applies to any Lie group. 
	\end{proof}

	Let $E$ be a $G$-vector bundle over $X$, as $E$ is finite-dimensional there are only a finite number of isomorphism classes $[\rho]$ such that $$\Hom_A(\rho,E)\neq0.$$ Moreover if $\Hom_A(\rho,E)\neq0$, for every $g\in G$ we have $\Hom_A(g\cdot\rho,E)\neq0.$ 
 Hence, if $[\rho]$ appears in $E$, then every element in its orbit appears in $E$. This implies that $G_{[\rho]}$ has finite index in $G$. Therefore we can apply the induction defined previously. We have the following result.
	
	\begin{thm}\label{vectordecomp}Let $G$ be a Lie group and $X$ be a $G$-space. Suppose that $A$ is a compact normal subgroup of $G$ acting trivially over $X$. Let $p:E\to X$ be a finite-dimensional $G$-vector bundle,. Then we have a natural (on $X$) isomorphism of $G$-vector bundles
		$$\bigoplus_{[\rho]\in G\setminus \Irr(A)}\Ind_{G_{[\rho]}}^{G}\left(\rho\otimes\Hom_A(\rho,E)\right)\to E.$$where $[\rho]$ runs over representatives of the orbits of the $G$-action on the set of isomorphism classes of representations of $A$.
	\end{thm}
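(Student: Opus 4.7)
The plan is to build the stated isomorphism in three layers, moving from an $A$-decomposition to a $G_{[\rho]}$-decomposition to the full $G$-decomposition via induction. First, since $A$ is compact and acts trivially on $X$, Segal's isotypical decomposition gives
\[
E\mid_A \;\cong\; \bigoplus_{[\sigma]\in\Irr(A)} E_\sigma, \qquad E_\sigma = \sigma\otimes \Hom_A(\sigma,E),
\]
as $A$-vector bundles. Because $E$ is finite-dimensional only finitely many $E_\sigma$ are nonzero, and the assignment $\phi\mapsto (v\mapsto g\phi(v))$ shows that $\Hom_A(\sigma,E)\neq 0$ implies $\Hom_A(g\cdot\sigma,E)\neq 0$ for every $g\in G$. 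Hence the support of the decomposition is $G$-stable and decomposes into finite $G$-orbits, so each $G_{[\rho]}$ has finite index in $G$ and the induction $\Ind_{G_{[\rho]}}^{G}$ is defined.

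Second, I would fix orbit representatives $[\rho]\in G\setminus\Irr(A)$ and a transversal $R=\{r_1,\dots,r_n\}$ for $G/G_{[\rho]}$ (with $r_1=e$), and reorganise the Segal decomposition by $G$-orbit:
\[
E\mid_A \;\cong\; \bigoplus_{[\rho]\in G\setminus\Irr(A)}\;\bigoplus_{i=1}^{n} E_{r_i\cdot\rho}.
\]
The key geometric step is a canonical $G_{[r_i\cdot\rho]}$-equivariant isomorphism $(r_i^{-1})^{*}E_\rho\cong E_{r_i\cdot\rho}$ given fibrewise by sending $(x, v\otimes \phi)$, with $\phi\in\Hom_A(\rho, E_{r_i^{-1}x})$, to $v\otimes \widetilde\phi$ where $\widetilde\phi(v)=r_i\phi(v)$. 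One checks directly using $\rho(r_i^{-1}ar_i)=(r_i\cdot\rho)(a)$ that $\widetilde\phi$ is $A$-equivariant for the twisted representation $r_i\cdot\rho$, and the assignment is a bundle isomorphism because the Segal decomposition is.

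Third, assembling these fibrewise data over the transversal $R$ gives a bundle map
\[
\Phi_\rho\co \Ind_{G_{[\rho]}}^{G}(E_\rho)\;=\;\bigoplus_{i=1}^{n}(r_i^{-1})^{*}E_\rho \;\longrightarrow\; E, \qquad (x,v\otimes\phi)\in (r_i^{-1})^{*}E_\rho\;\mapsto\; r_i\phi(v),
\]
and summing over $[\rho]$ yields the candidate isomorphism. The remaining verification is that $\Phi_\rho$ is $G$-equivariant: if $gr_i=r_j s$ with $s\in G_{[\rho]}$, then by definition the induced action takes the $i$-summand to the $j$-summand via the $G_{[\rho]}$-action on $E_\rho$, and a short computation shows $\Phi_\rho(g\cdot(x,v\otimes\phi))=gr_i\phi(v)=r_j(s\cdot(v\otimes\phi))$, matching the image under $\Phi_\rho$ of the induced action. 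Bijectivity is then immediate on each fibre from the Segal decomposition.

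The main obstacle is the bookkeeping in the $G$-equivariance check, since the induced action mixes summands indexed by cosets while Segal's decomposition is intrinsic; once the coset shift $gr_i=r_js$ is tracked carefully through the twisted $A$-representations $r_i\cdot\rho$ and the evaluation map $\phi\mapsto r_i\phi$, the two $G$-actions match by construction.
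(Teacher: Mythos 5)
Your proposal is correct and follows essentially the same route as the paper: Segal's isotypical decomposition, the observation that the support is $G$-stable so each $G_{[\rho]}$ has finite index, the canonical isomorphism $(r_i^{-1})^{*}E_\rho\cong E_{r_i\cdot\rho}$ via $\phi\mapsto r_i\phi$, and the $G$-equivariant evaluation map $(x,v\otimes\phi)\mapsto r_i\phi(v)$ assembling the induction. The equivariance bookkeeping with $gr_i=r_js$ is exactly the verification the paper carries out.
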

	\begin{proof} 
 We have a $G_{[\rho]}$-equivariant map
 \begin{align*}\Hom_A(\rho,E)&\xrightarrow{\evv} E\\(v,\phi)&\mapsto (\phi(v)) 
 \end{align*}
    that by Thm. \ref{inductionprop}  gives a $G$-equivariant map  \begin{align}\label{Segal}\Ind_{G_{[\rho]}}^{G}\left(\rho\otimes\Hom_A(\rho,E)\right)\to E \end{align} natural in $X$. Moreover, the map (\ref{Segal}) is precisely the inverse of the isomorphism in  Prop. 2.2 in \cite{segal}, then we obtain that it is an isomomorphism of $G$-vector bundles natural in $X$.
	\end{proof}
	The previous theorems applied to $X=*$ are the usual Mackey machine which describes how to get the finite-dimensional unitary representations of $G$ in terms of irreducible representations of $A$ and $\widetilde{Q}_{[\rho]}$-twisted representations of $Q_{[\rho]}$: 
	\begin{enumerate}
		\item Every finite-dimensional  representation of $G$ is the sum of induced representations of $(G_{[\rho]},\rho)$-isotypic ones (Theorem \ref{vectordecomp}).
		\item $(G_{[\rho]},\rho)$-isotypic representations can be obtained by tensoring the extension of $\rho$ to $G_{[\rho]}$ with  $\widetilde{Q}_{[\rho]}$-twisted representations of $Q_{[\rho]}$ (Theorem \ref{toe}).
	\end{enumerate}

	\section{Decomposition in equivariant K-theory (proper case)}\label{proper$K$-theory}
	For a general Lie  group $G$, we extend the definition in \cite{lo} of  equivariant K-theory for proper actions, to twisted equivariant K-theory for the twisting relevant to our decompositions. See also \cite{cantat}. 
	
	For general Lie groups we work on the category of locally compact Hausdorff second countable spaces.  If the group is discrete or compact we can also work on the category of proper $G$-CW complexes.

	\begin{defn}\label{twisteddef}
		Let $Q$ be a Lie group acting properly on a  $Q$-space $X$ and 
		\begin{equation}\label{centralexten}
		1\to S^1\to\widetilde{Q}\to Q\to 1
		\end{equation}
		be a $S^1$-central extension. Define  ${}^{\widetilde{Q}}\mathbb{K}_Q^0(X)$ as the Grothendieck group of the monoid of isomorphism classes of $\widetilde{Q}$-vector bundles over $X$ where $S^1$ acts by multiplication by inverses. Define for $n\geq0$ $${}^{\widetilde{Q}}\mathbb{K}_Q^{-n}(X)=\ker\left({}^{\widetilde{Q}}\mathbb{K}_Q(X\times S^n)\xrightarrow{i^*}{}^{\widetilde{Q}}\mathbb{K}_Q(X)\right),$$ where $Q$ acts trivially on $S^n$ and $i(x)=(x,e_1)$ with $e_1$ the first element in the canonical basis of $\mathbb{R}^{n+1}$.  
		For any proper $Q$-pair $(X,A)$,  set$${}^{\widetilde{Q}}\mathbb{K}_Q^{-n}(X,A)=\ker\left({}^{\widetilde{Q}}\mathbb{K}_Q^{-n}(X\cup_AX )\xrightarrow{i_2^*}{}^{\widetilde{Q}}\mathbb{K}_Q^{-n}(X)\right).$$ Where $i_2:X\to X\cup_AX$ is the map induced by the inclusion of $X$ in the second copy of $X\amalg X$. Note that, when $\widetilde{Q}\cong S^1\times Q$, a $\widetilde{Q}$-vector bundle over $X$ where $S^1$ acts by multiplication by inverses is the same as a $Q$-vector bundle; then we have a canonical identification
		$\mathbb{K}_Q^{*}(X) \cong {}^{\widetilde{Q}}\mathbb{K}_Q^{*}(X)  \text{ and  }\mathbb{K}_Q^{*}(X,A) = {}^{\widetilde{Q}}\mathbb{K}_Q^{*}(X,A)$.
	\end{defn}
	Recall that a functor from a subcategory of $Q$-spaces to abelian groups is a $Q$ cohomology theory if it satisfies the following axioms:
			\begin{enumerate}
			\item $Q$-homotopy invariance.
			\item Long exact sequence of a pair.
			\item Excision.
			\item Disjoint union axiom.
			\end{enumerate}  
	For more details on the axioms see \cite{luckcoh}. Suppose that the Lie group $Q$ satisfies the following assumption:
	\begin{equation}
	\tag{K}\label{eq:A}
	\parbox{\dimexpr\linewidth-4em}{%
		\strut%
		\emph{For every compact normal subgroup $A$  and for each isomorphism class of an irreducible representation $[\rho] \in \Irr(A)$, 
			the functors ${}^{\widetilde{Q}_{[\rho]}}\mathbb{K}_{Q_{[\rho]}}^*(-)$ define a $\Z/2$-graded $Q_{[\rho]}$-cohomology  theories. 
		}\strut
	}\end{equation} 
	Note that by taking $A$ the trivial group, we have $Q_{[\rho]}=G_{[\rho]}=G$ and $\widetilde{Q}_{[\rho]} = S^1 \times G $, hence we are requiring in particular, that $\mathbb{K}_G^{*}(-)$ is a $G$-cohomology theory.

	When $Q$ satisfies the above assumption we write ${}^{\widetilde{Q}}K_Q^{-n}(X,A)$ for ${}^{\widetilde{Q}}\mathbb{K}_Q^{-n}(X,A).$
	\begin{remark}\label{nota1}
		From \cite{lo} and \cite{dwyer}, a discrete group $G$ satisfies assumption (K) (in the category of finite, proper $G$-CW complexes). Additionally, any compact Lie group also satisfies assumption (K) (in the category of locally compact Hausdorff second countable $G$-spaces). 
        \end{remark}  
		In the appendix  we will build upon the work of \cite{Phil2}  to prove that  almost connected and linear Lie groups satisfy assumption $(K)$ (in the category of locally compact Hausdorff second countable $G$-spaces). However there are Lie groups where assumption (K) fails to be true; see   \cite{Phil} and   \cite{lo}. 
	\
	
	For a general Lie group, there is an induction scheme: if  $H$ is a subgroup of $G$ and $Y$ is a proper $H$-space, then
	$$
	\mathbb{K}_G^*(G \times_H Y) \cong \mathbb{K}_H^*(Y)
	$$
	(see lemma 3.4 of \cite{lo}). A similar result applies to free actions, analogue to Lemma 3.5 of \cite{lo}.
	
	Let $G$ be a Lie group acting properly on a  $G$-space $X$ and 
	\begin{equation}
	1\to S^1\to\widetilde{G}\to G\to 1
	\end{equation}
	be a $S^1$-central extension. When $G$ acts freely on $X$,
	$$
	{}^{\widetilde{G}} \mathbb{K}^*_{G}(X) \cong {}^{\alpha} K^*(X/G)
	$$
	where the right hand side denotes the \emph{classical} twisted K-theory (defined for example in \cite{AS}), and  $\alpha$ is a cohomology class induced from the central extension (\ref{centralexten}). This is done for finite groups in Proposition 3.8 of \cite{GU}, and for proper actions of Lie groups on manifolds using $C^*$-algebras in Proposition 3.6 of \cite{twistedstacks}. 
	The twisting $\alpha$ can be described as follows: for the central extension
	$$
	1 \to S^{1} \stackrel{\widetilde{\iota}}{\rightarrow} \widetilde{G} \rightarrow G \to 1
	$$
	consider the cohomology class $\alpha_{\widetilde{G}} \in H^2(BG;S^1)$  that classifies it (see Thm. 10 in \cite{moore}). We have the fibration
	$
	p : EG \times_G X \rightarrow BG,
	$
	and since the action is free, there exists a homotopy equivalence
	$
	EG \times_G X \rightarrow X/G
	$
	giving a commutative diagram
	$$
	\xymatrix{
		EG \times_G X \ar[r]^{p} \ar[d]^{\simeq} & BG \\
		X/G \ar[ur]
	}
	$$
	Consider the cohomology class $\alpha \in H^2(X/G;S^1) \cong H^2_G(X;S^1)$ given by $p^*(\alpha_{\widetilde{G}})$.

	\begin{thm}\label{decompintro}
		Let $G$ be a Lie group satisfying assumption (K), and $X$ be a proper $G$-space  on which the normal subgroup $A$ acts trivially. There is a natural isomorphism 
		\begin{align*}\Psi_X:K^*_G(X)&\to\bigoplus_{[\rho]\in G\setminus\Irr(A)}{}^{\widetilde{Q}_{[\rho]}}K^*_{Q_{[\rho]}}(X)\\ [E]&\mapsto\bigoplus_{[\rho]\in G\setminus\Irr(A)}\Hom_A(\rho,E).\end{align*} This isomorphism is functorial on $G$-maps $X\to Y$ of proper $G$-spaces on which $A$ acts trivially. 
	\end{thm}
	\begin{proof}
		It is a straightforward application of Theorems \ref{toe} and \ref{vectordecomp}.     
	\end{proof}

	\section{Structure of actions with only one isotropy type}\label{sectionb}

	When the action of a compact Lie group on a space $X$ is not free but has only one isotropy type $(G/H)$, with $H$  a closed subgroup of $G$, we have that $X \rightarrow X/G$ is a fiber bundle with fiber $G/H$ and structure group $NH/H$.
	

		$$
		NH/H \rightarrow X^H \rightarrow X/G.
		$$
	
	In fact, a $G$-space with only one orbit type  is  the $G$-induction of the associated fiber bundle over $X/G$ with fiber $G/H$ and structure group $NH/H$. Therefore, we can state the following:
	



	\begin{thm}[Theorem 5.9 \cite{bredon} page 89] \label{bredon3}
		Let $G$ be a compact Lie group acting continuously on a completely regular Hausdorff space  $X$. Suppose $H$ is a closed subgroup of $G$ (not necessarily normal). If the action of $G$ on $X$ has  only one orbit type $(G/H)$ (all isotropy groups are conjugate to $H$), then the map $\Phi([g,x])= gx$
		$$
		\Phi: G \times_{NH} X^H \rightarrow X
		$$
		is a $G$-homeomorphism. 
	\end{thm}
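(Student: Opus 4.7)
The plan is to verify that $\Phi$ is a continuous $G$-equivariant bijection and then to upgrade this to a homeomorphism via a local slice argument.

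First I would check that $\Phi$ is well-defined and continuous. The map $\overline{\Phi}\colon G \times X^H \to X$ sending $(g,x) \mapsto gx$ is manifestly continuous; for any $n \in NH$ one has $\overline{\Phi}(gn^{-1}, nx) = gx$, so $\overline{\Phi}$ descends to the quotient $G \times_{NH} X^H$. Equivariance under left $G$-multiplication on the first coordinate is immediate. Surjectivity is a direct consequence of the orbit type hypothesis: any $y \in X$ satisfies $G_y = gHg^{-1}$ for some $g \in G$, so $g^{-1}y$ is fixed by $H$ and $\Phi([g, g^{-1}y]) = y$.

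For injectivity, the key observation is that any $x \in X^H$ has isotropy \emph{exactly} equal to $H$. Indeed $G_x \supseteq H$, and $G_x$ is conjugate to $H$ by hypothesis; since for a compact Lie group a subgroup cannot properly contain a conjugate of itself (dimension and number of connected components are preserved under conjugation), we conclude $G_x = H$. Now if $\Phi([g,x]) = \Phi([g',x'])$ with $x, x' \in X^H$, set $n = g^{-1}g'$, so that $nx' = x$; then $n$ conjugates $G_{x'} = H$ onto $G_x = H$, hence $n \in NH$, and therefore $[g',x'] = [gn, x'] = [g, nx'] = [g,x]$.

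The harder step is showing that $\Phi^{-1}$ is continuous, and this is the main technical obstacle. My approach would be to invoke the slice theorem for compact Lie group actions on completely regular Hausdorff spaces: around each orbit $Gx$ there is a $G$-invariant tubular neighborhood $G$-homeomorphic to $G \times_{G_x} S$ for a suitable slice $S$ at $x$. Because the unique orbit type is $(G/H)$, we may choose $x \in X^H$, the slice $S$ lies inside $X^H$ with trivial $H$-action, and the tube is naturally identified with an open $G$-subset of $G \times_{NH} (NH \cdot S) \subseteq G \times_{NH} X^H$. Patching these local identifications along a cover of $X$ by such tubes exhibits $\Phi$ as a local, and therefore global, $G$-homeomorphism. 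Once this last point is established the theorem follows; all the preceding steps are essentially formal.
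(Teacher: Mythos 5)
Your argument is correct, and since the paper offers no proof of this statement (it is quoted from Bredon), the natural comparison is with the proof the authors do give for its generalization to proper actions, Theorem \ref{oneisoproper}. The well-definedness, surjectivity, and injectivity steps are exactly right; in particular your use of the fact that a compact Lie subgroup cannot be properly contained in a conjugate of itself (the Duistermaat--Kolk lemma the paper also invokes) is the correct key point for showing $G_x=H$ for $x\in X^H$. For the continuity of the inverse, you and the paper both ultimately rest on the slice theorem, but you organize it differently: you patch local identifications of tubes $G\times_H S$ with open subsets of $G\times_{NH}X^H$, whereas the paper constructs a continuous $G$-map $\phi\colon X\to G/NH$ with $\phi^{-1}(eNH)=X^H$ (using slices only to check continuity of $\phi$) and then quotes the general principle (Theorem \ref{conditionS}) that any $G$-map to $G/NH$ exhibits $X$ as $G\times_{NH}\phi^{-1}(eNH)$. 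The two routes are essentially equivalent in content; yours is more self-contained, theirs isolates the formal step so that only the continuity of $\phi$ needs the slice theorem, which is why it transfers cleanly to the proper, non-compact setting. Two points in your sketch deserve to be made explicit: (i) that the slice $S$ at $x\in X^H$ lies in $X^H$ with trivial $H$-action is not automatic from the slice theorem --- it follows because each $s\in S$ has $G_s\subseteq H$ and $G_s$ conjugate to $H$, so $G_s=H$ by the same no-proper-containment lemma; and (ii) the identification of the tube $U\cong G\times_H S$ with an open subset of $G\times_{NH}X^H$ rests on the computation $U\cap X^H=NH\cdot S$ (a point $[g,s]$ is $H$-fixed iff $H\subseteq gHg^{-1}$ iff $g\in NH$), which is what guarantees both that $NH\cdot S$ is open in $X^H$ and that $NH\times_H S\to NH\cdot S$ is a homeomorphism. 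With those two sentences added, the proof is complete.
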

	
	For a compact Lie group $G$, many $G$-spaces are of the form $G \times_H X$ for $X$ an $H$-space.  Consider a $G$-space $Y$ with a $G$-equivariant continuous map $f: Y \rightarrow G/H$.
	If we take $X=f^{-1}(eH)$, the natural map
	$$
	F : G \times_H X \rightarrow Y
	$$
	is a $G$-equivariant homeomorphism (assuming $G$ is a compact Lie group and $H$ closed).
	
	We aim to generalize this theorem to more diverse groups. This generalization corresponds to conditon (S) of \cite{uribe} and it is satisfied, for example, if the group  is locally compact, second countable and has finite covering dimension. In particular, Lie groups satisfy condition (S).

	\begin{thm}[\cite{uribe} Lemma 4.2] \label{conditionS}Let $ f : Y \rightarrow G / H$  be a $ G$-equivariant  map for some subgroup $H \subseteq G$.  Suppose that condition (S) is satisfied. Then the $G$-equivariant map  $ F: G \times_H f^{-1}(eH) \rightarrow Y$ given by $[g, e] \rightarrow  g \cdot e$  is a $G$-homeomorphism.
	\end{thm}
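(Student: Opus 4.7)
The plan is to verify that $F$ is a continuous $G$-equivariant bijection and then use condition (S) to upgrade this to a homeomorphism. The final step is the only genuinely nontrivial one.

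First I would check the formal properties of $F$. Well-definedness follows from $F([gh, h^{-1}y]) = gh\cdot h^{-1}y = g\cdot y$ for $h \in H$, continuity from the universal property of the quotient $G \times f^{-1}(eH) \to G \times_H f^{-1}(eH)$ combined with continuity of the action, and $G$-equivariance from $F([g'g, y]) = g'g\cdot y = g'\cdot F([g,y])$. Bijectivity is also quick: since $y \in f^{-1}(eH)$ one has $f(F([g,y])) = gH$, which on the one hand implies injectivity (if $F([g_1,y_1]) = F([g_2,y_2])$ then $g_2 = g_1 h$ for some $h \in H$, forcing $y_1 = h y_2$ and therefore $[g_1,y_1] = [g_2,y_2]$), and on the other hand gives surjectivity (given $z \in Y$, pick $g \in G$ with $gH = f(z)$; then $g^{-1}z \in f^{-1}(eH)$ and $F([g, g^{-1}z]) = z$).

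The main step is constructing a continuous inverse. Condition (S) is designed precisely so that the principal $H$-bundle $G \to G/H$ admits local sections (this is the content underlying Lemma 4.2 of \cite{uribe}, which in turn rests on dimension-theoretic arguments for locally compact, second countable groups of finite covering dimension). Granting a local section $s : U \to G$ on an open neighborhood $U \subseteq G/H$ of the coset $eH$, I would define on $f^{-1}(U)$ the map
\[
\sigma : f^{-1}(U) \to G \times_H f^{-1}(eH), \qquad y \longmapsto \bigl[\,s(f(y)),\; s(f(y))^{-1}\, y\,\bigr].
\]
Note that $f\bigl(s(f(y))^{-1} y\bigr) = s(f(y))^{-1}\cdot f(y) = eH$, so the second coordinate lands in $f^{-1}(eH)$. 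By construction $F \circ \sigma = \mathrm{id}$ on $f^{-1}(U)$, and continuity of $\sigma$ is immediate from continuity of $s$, $f$, the action, and inversion.

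Finally I would globalize using $G$-equivariance. The translates $\{gU\}_{g \in G}$ cover $G/H$, and hence the open sets $\{g\cdot f^{-1}(U)\}_{g \in G}$ cover $Y$. For $y$ in such a translate, pick $g$ with $g^{-1}y \in f^{-1}(U)$ and set $F^{-1}(y) := g\cdot \sigma(g^{-1}y)$. Independence of the choice of $g$ follows because any two such choices differ by an element $h$ with $h^{-1}\cdot f^{-1}(U) \cap f^{-1}(U) \neq \emptyset$, and the resulting ambiguity is absorbed into the $H$-action in the balanced product, so the local definitions patch into a globally continuous map. The expected obstacle is entirely concentrated in invoking condition (S) for the existence of the local section $s$; once that is in hand, the rest of the argument is a routine globalization using equivariance.
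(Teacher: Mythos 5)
Your proof is correct. The paper itself offers no argument for this statement --- it is quoted verbatim from Lemma 4.2 of the cited reference --- and your proof is essentially the standard one given there: condition (S) is precisely the existence of a local cross section of $G \to G/H$ near $eH$, and the continuous inverse is built locally from that section and globalized by equivariance. One small simplification: in the globalization step, independence of the choice of $g$ follows immediately from injectivity of $F$, since every candidate value $g\cdot\sigma(g^{-1}y)$ satisfies $F\bigl(g\cdot\sigma(g^{-1}y)\bigr)=y$; there is no need to analyze how the two group elements differ.
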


	We will use the previous theorem to study proper actions of Lie groups with only one isotropy type. When the Lie group acts smoothly and properly on a manifold, the results below follow for example from \cite{Kolk} Theorem 2.6.7, Part iii)  and iv) respectively.
	\begin{thm}\label{Th4.1}
		Let $G$ be a Lie group acting continuously and properly on a manifold $M$. Suppose $H$ is a closed subgroup of $G$ (not necessarily normal). If the action of $G$ on $M$ has  only one orbit type $(G/H)$ (all isotropy groups are conjugate to $H$), then the action of $NH/H$ on $M^H$ is free, and $M^H/NH$ is homeomorphic to $M/G$. Hence, there is a principal $NH/H$-bundle:
		$$
		NH/H \rightarrow M^H \rightarrow M/G
		$$
	\end{thm}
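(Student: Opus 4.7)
The plan is to adapt Bredon's proof of Theorem~\ref{bredon1} to the proper, non-compact setting by replacing compactness of $G$ with properness of the action together with the slice theorem for proper Lie group actions (Palais), whose conclusion is encoded here in Theorem~\ref{conditionS}.

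First I would pin down the isotropy at a point of $M^H$. For $x \in M^H$ the containment $H \subseteq G_x$ is automatic, and the one-orbit-type hypothesis gives some $g\in G$ with $G_x = gHg^{-1}$. Properness forces $G_x$, and hence $H$, to be compact; the inclusion $H \subseteq gHg^{-1}$ of two compact Lie groups that are conjugate (and therefore have the same dimension and the same finite number of connected components) then forces equality, so $g \in NH$ and $G_x = H$. In particular, if $n \in NH$ fixes a point of $M^H$, then $n \in H$, so the induced action of $NH/H$ on $M^H$ is free.

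Next I would analyse the map $\bar{\pi} \co M^H/NH \to M/G$ obtained by restricting the orbit projection $M \to M/G$ to $M^H$, which is constant on $NH$-orbits and therefore descends. Surjectivity: any $y \in M$ satisfies $G_y = g^{-1}Hg$ for some $g$, and then $gy$ lies in $M^H$ and maps to $[y]_G$. Injectivity: if $x,y \in M^H$ and $y = gx$, then $H = G_y = gG_xg^{-1} = gHg^{-1}$ by the first step, so $g \in NH$. Hence $\bar{\pi}$ is a continuous bijection.

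The main technical step, and the one I expect to be the main obstacle, is local triviality. Around each orbit $Gx \subseteq M$ the slice theorem produces a $G$-invariant neighbourhood equivariantly homeomorphic to a tube $G \times_H S$ where $S$ is an $H$-slice through $x$; equivalently, the tube carries a $G$-equivariant map to $G/H$ to which Theorem~\ref{conditionS} applies. The one-orbit-type hypothesis, combined with the compactness argument of the first step applied fibrewise, forces the $H$-action on $S$ to be trivial, so the tube is $(G/H) \times S$ as a $G$-space. Taking $H$-fixed points gives an open neighbourhood $(NH/H) \times S \subseteq M^H$ with $NH/H$ acting by left translation on the first factor and projecting to an open neighbourhood of $[x]_G$ in $M/G$. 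This simultaneously exhibits $M^H \to M/G$ as locally trivial with fibre and structure group $NH/H$, yielding the claimed principal bundle, and shows that $\bar{\pi}$ is a local homeomorphism; combined with bijectivity it gives the homeomorphism $M^H/NH \cong M/G$. The delicate input is the availability of the slice theorem in the non-compact case, which is supplied by properness of the action.
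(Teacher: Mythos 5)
Your argument is correct, but it takes a genuinely different route from the paper, which for this statement offers no argument of its own: it simply cites Duistermaat--Kolk (Theorem 2.6.7, parts (iii) and (iv) of \cite{Kolk}) for the smooth proper case. The closest the paper comes to an actual proof is its later Theorem \ref{oneisoproper}, where the strategy is to build a single global $G$-equivariant map $\phi\co X\to G/NH$ (constant on slices, hence continuous) and then invoke the condition-(S) lemma (Theorem \ref{conditionS}) to obtain $G\times_{NH}X^H\cong X$, from which freeness and $X^H/NH\cong X/G$ are read off. You instead argue locally: you first pin down $G_x=H$ for $x\in M^H$ via compactness of isotropy groups for proper actions plus the rigidity fact $H\subseteq gHg^{-1}\Rightarrow H=gHg^{-1}$ for compact Lie groups (the same Corollary 1.11.9 of \cite{Kolk} the paper uses), deduce freeness and bijectivity of $M^H/NH\to M/G$ directly, and then get local triviality by showing each Palais tube has the form $(G/H)\times S$ with trivial $H$-action on the slice, whose $H$-fixed set is $(NH/H)\times S$. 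Your approach is self-contained and makes the principal-bundle structure explicit fibrewise, at the cost of having to verify local triviality by hand; the paper's global-map approach packages that verification into Theorem \ref{conditionS} and generalizes more immediately beyond manifolds to completely regular Hausdorff spaces and $G$-CW complexes. One small point worth making explicit in a final write-up: the identification $(G/H)^H=NH/H$ again uses the compact-subgroup rigidity, so that ingredient enters your proof twice.
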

	
	\begin{thm}
		Keeping the same assumptions as in Theorem \ref{Th4.1}, the map 
	\begin{align*}
		\Phi:G \times_{NH/H} M^H &\rightarrow M\\ [gH,x]&\mapsto gx
		\end{align*}
		is a diffeomorphism of  $G$-spaces. 
	\end{thm}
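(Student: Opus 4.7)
The plan is to verify that $\Phi$ is smooth and $G$-equivariant (which is immediate from its definition), to show $\Phi$ is a bijection, and then to invoke the slice theorem for proper smooth actions of Lie groups to conclude it is a local diffeomorphism; combined with bijectivity this yields a $G$-diffeomorphism. This mirrors the pattern by which the compact case (Theorem \ref{bredon2}) is deduced from Theorem \ref{bredon3}. A preliminary rigidity observation I will use is that every $x \in M^H$ satisfies $G_x = H$: indeed $G_x \supseteq H$ by the definition of $M^H$, and $G_x$ is conjugate to $H$ by the single-orbit-type hypothesis; for proper smooth actions of Lie groups on manifolds isotropy groups are compact, so any subgroup of the compact group $H$ conjugate to $H$ must equal $H$ (same dimension, same component group).

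For surjectivity, given $y \in M$ with $G_y = gHg^{-1}$, we have $g^{-1}y \in M^H$ and $\Phi([gH, g^{-1}y]) = y$. For injectivity, suppose $g_1 x_1 = g_2 x_2$ with $x_i \in M^H$. Setting $n = g_1^{-1} g_2 \in G$, so that $n x_2 = x_1$, the rigidity observation gives $nHn^{-1} = G_{nx_2} = G_{x_1} = H$, i.e.\ $n \in NH$. Then in $G/H \times_{NH/H} M^H$ we have $[g_1 H, x_1] = [g_1 n H, n^{-1} x_1] = [g_2 H, x_2]$.

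For the local-diffeomorphism step I would apply the slice theorem of Palais at each $m \in M^H$: there is an $H$-invariant submanifold $S$ through $m$ such that $G \times_H S \to G\cdot S$ is a $G$-equivariant diffeomorphism onto an open neighborhood of $Gm$, with $G_s \subseteq H$ for every $s \in S$. The rigidity observation upgrades this to $G_s = H$, so $S \subseteq M^H$ and the $H$-action on $S$ is trivial, giving $G \times_H S \cong G/H \times S$. A dimension count together with $T_m S \cap T_m(Gm) = 0$ and the inclusion $(NH/H)\cdot m \subseteq Gm$ shows that $S$ is transverse at $m$ to the $(NH/H)$-orbit through $m$ inside $M^H$, so after shrinking $S$ it serves as a local slice for the free $(NH/H)$-action on $M^H$, yielding $(NH/H)\cdot S \cong (NH/H) \times S$ equivariantly. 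Plugging this into the associated bundle produces $G/H \times_{NH/H} ((NH/H)\cdot S) \cong G/H \times S$, and under these identifications $\Phi$ restricts to $(gH, s) \mapsto gs$, which is the slice diffeomorphism $G/H \times S \cong G \times_H S \to G\cdot S$. Hence $\Phi$ is a bijective local diffeomorphism, and therefore a $G$-diffeomorphism.

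The main obstacle is the careful alignment of the two slice structures, the $G$-slice at $m \in M$ and the $(NH/H)$-slice at $m \in M^H$; once the slice theorem places both local models on $G/H \times S$, the conclusion falls out by unwinding definitions. A secondary subtlety is the rigidity $G_s = H$, which relies essentially on compactness of $H$ (automatic here from properness) and which would fail for a non-compact isotropy subgroup.
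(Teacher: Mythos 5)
Your argument is correct and is essentially the standard slice-theorem proof; the paper itself offers no argument here beyond citing Duistermaat--Kolk, Theorem 2.6.7, parts (iii)--(iv), whose proof proceeds exactly along your lines (rigidity of the isotropy via compactness, bijectivity, and the smooth slice theorem). The only step I would tighten is the claim that $S$ serves as a local slice for the $NH/H$-action on $M^H$: rather than a transversality and dimension count (which presupposes knowing $\dim_m M^H$), it is cleaner to compute directly that a point $gs$ of the tube $G\cdot S\cong G\times_H S$ is $H$-fixed iff $H\subseteq gG_sg^{-1}=gHg^{-1}$, i.e.\ iff $g\in NH$ by your rigidity observation, so that $(G\cdot S)\cap M^H=NH\cdot S\cong (NH/H)\times S$; this yields the slice, the local model $G/H\times S$ on both sides, and the needed dimension identity all at once.
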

	
	The following theorems extend the previous results to proper actions of Lie groups. The key fact is the existence of tubes and slices for proper actions of groups. In the case of completely regular Hausdorff spaces, see Theorem 2.3.3 of \cite{palais}; for $G$-CW complexes, see Theorem 7.1 of  \cite{uribe}.
	
	\begin{thm} \label{oneisoproper}
		Let $G$ be a Lie group acting properly  on a completely regular Hausdorff space $X$. Suppose $H$ is a compact subgroup of $G$ (not necessarily normal). If the action of $G$ on $X$ has  only one orbit type $(G/H)$ (all isotropy groups are conjugate to $H$), then the map $\Phi([g,x])=gx$
		$$
		\Phi : G \times_{NH} X^H \rightarrow X
		$$
		is a $G$-homeomorphism. 
		\begin{proof}
			Consider the function $\phi : X \rightarrow G/NH$ defined for $x \in X$ as follows: the stabilizer at $x$ is $gHg^{-1}$ for some $g \in G$, and therefore the stabilizer of  $g^{-1}x$ is $H$. $\phi$ is defined sending $x$ to the coset $gNH$.
			
			This function is $G$-equivariant because $\overline{g}x$ has stabilizer $\overline{g}gHg^{-1}\overline{g}^{-1}$ and therefore $\phi(\overline{g}x) = \overline{g}gNH=\overline{g}\phi(x)$.
			
			If $\phi(x) =gNH$, then $G_x = gHg^{-1}$ so that $x \in X^{gHg^{-1}}$. Conversely, if $x \in X^{gHg^{-1}}$, then $gHg^{-1} \subseteq G_x$, and since we are assuming only one isotropy type, then $G_x = \overline{g} H \overline{g}^{-1}$ for some $\overline{g} \in G$. Now we know that
			$$
			gHg^{-1} \subseteq \overline{g} H \overline{g}^{-1} \text { implies } gHg^{-1} =  \overline{g} H \overline{g}^{-1}.
			$$
			Since $H$ is compact, $gH g^{-1}$ and $\overline{g}H \overline{g}^{-1}$ have a finite number of components, and both are Lie groups. Therefore 
			$$
			\phi(x) = gNH  \iff x \in X^{gHg^{-1}}.
			$$
			
			Since the action of $G$ on $X$ is proper and $X$ is a completely regular Hausdorff space, by \cite{palais}, Theorem 2.3.3, there is a slice and tube around $x$. Take a slice at $x$; this is a subset $S \subseteq X$ that is $G_x$-equivariant and such that the natural map $G \times_{G_{x}} S \rightarrow X$ is a $G$-homeomorphism onto an open subset $U$ of $X$. 
			
			Since  the stabilizer at $x$ is $gHg^{-1}$, the tube $U$ has isotropy at $[\overline{g},s]$ given by $G_{[\overline{g},s]} = \overline{g}{({G_{x}})_s}\overline{g}^{-1}  \subseteq \overline{g}gHg^{-1} \overline{g}^{-1}$. But since the isotropy type of $\overline{g}s$ is also $H$, there exists $\widetilde{g}$ such that
			$$
			G_{[\overline{g},s]} = \widetilde{g} H \widetilde{g}^{-1} \subseteq \overline{g}gHg^{-1} \overline{g}^{-1}.
			$$
			
			But since $H$ is compact, $\widetilde{g} H \widetilde{g}^{-1}$ and $\overline{g}gHg^{-1} \overline{g}^{-1}$ have a finite number of components, and since they are both Lie groups, once again we know that, $\widetilde{g} H \widetilde{g}^{-1} \subseteq \overline{g}gHg^{-1} \overline{g}^{-1}$ 
			implies $\widetilde{g} H \widetilde{g}^{-1} = \overline{g}gHg^{-1} \overline{g}^{-1}$, and therefore $\widetilde{g}NH = \overline{g}g  NH$. 
			On the tube $G \times_{G_x} S$ we have 
			$$
			\phi([\overline{g},s]) = \overline{g} g NH
			$$
			which means $\phi$ is constant on the slice and is continuous on the tube at $x$. \
			
			As $\phi^{-1}(eNH) =X^H$, using Theorem \ref{conditionS} the proof is finished.
		\end{proof}
	\end{thm}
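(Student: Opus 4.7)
The plan is to reduce the statement to Theorem \ref{conditionS}. That theorem produces a $G$-homeomorphism $G\times_K f^{-1}(eK)\cong Y$ from any $G$-equivariant map $f\colon Y\to G/K$; taking $K=NH$ and $Y=X$, it suffices to construct a continuous $G$-equivariant map $\phi\colon X\to G/NH$ whose fibre over $eNH$ is exactly $X^H$, since then the fibre over $eNH$ agrees with the slice $X^H$ and Theorem \ref{conditionS} yields the desired map $\Phi\colon G\times_{NH}X^H\to X$ as the canonical $G$-homeomorphism.

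To define $\phi$, I use the only-one-isotropy assumption: for $x\in X$ there is $g\in G$ with $G_x=gHg^{-1}$, and I set $\phi(x):=gNH$. This coset is independent of the choice of $g$, since if $g_1Hg_1^{-1}=g_2Hg_2^{-1}$ then $g_2^{-1}g_1\in NH$. The identity $G_{\bar{g}x}=\bar{g}G_x\bar{g}^{-1}$ makes $\phi$ tautologically $G$-equivariant. For the fibre: $\phi(x)=eNH$ means $G_x=gHg^{-1}$ with $g\in NH$, hence $G_x=H$, so $x\in X^H$; conversely if $x\in X^H$ then $H\subseteq G_x$, while the only-one-isotropy hypothesis forces $G_x=\bar{g}H\bar{g}^{-1}$ for some $\bar{g}$. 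Since $H$ is compact, $H$ and $\bar{g}H\bar{g}^{-1}$ are compact Lie groups with the same finite number of components and the same dimension; Corollary 1.11.9 of \cite{Kolk} then upgrades the inclusion $H\subseteq \bar{g}H\bar{g}^{-1}$ to equality, so $G_x=H$ and $\phi^{-1}(eNH)=X^H$.

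The main obstacle is continuity of $\phi$, and here is where I will need the hypothesis that $X$ is completely regular Hausdorff and the action is proper. By Palais's slice theorem (Theorem 2.3.3 of \cite{palais}), every $x\in X$ admits a $G_x$-invariant slice $S$ such that the natural map $G\times_{G_x}S\to X$ is a $G$-homeomorphism onto an open tube $U$ around the orbit. I plan to show $\phi$ is constant on $S$, equal to $\phi(x)$; combined with equivariance this gives $\phi([\bar{g},s])=\bar{g}\phi(x)$ on $U$, which is continuous as the composition of the projection $G\times_{G_x}S\to G/G_x$ with the well-defined continuous map $G/G_x\to G/NH$. For constancy on the slice: any $s\in S$ has isotropy $G_s\subseteq G_x$, and by the only-one-isotropy hypothesis $G_s=\widetilde{g}H\widetilde{g}^{-1}$ for some $\widetilde{g}$; the same compactness-plus-component-count rigidity argument (Corollary 1.11.9 of \cite{Kolk}) applied to $\widetilde{g}H\widetilde{g}^{-1}\subseteq G_x=gHg^{-1}$ forces equality, so $\widetilde{g}NH=gNH$ and $\phi(s)=\phi(x)$.

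With $\phi$ continuous and $G$-equivariant and $\phi^{-1}(eNH)=X^H$, Theorem \ref{conditionS} applies verbatim and identifies $G\times_{NH}X^H$ with $X$ via $[g,x]\mapsto gx$, which is precisely the map $\Phi$. The technical heart of the argument is thus packaged into two ingredients: Palais slices to get local models, and the compact-subgroup rigidity of \cite{Kolk} to upgrade inclusions of conjugates of $H$ into equalities — the latter being essential because otherwise nothing prevents a nearby point in the slice from having a strictly smaller stabilizer than $G_x$, which would break both the fibre identification and the constancy on slices.
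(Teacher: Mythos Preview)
Your proof is correct and follows essentially the same route as the paper: define $\phi\colon X\to G/NH$ via the isotropy, verify equivariance and $\phi^{-1}(eNH)=X^H$ using the rigidity result Corollary 1.11.9 of \cite{Kolk}, establish continuity via Palais slices by showing $\phi$ is constant on each slice, and then invoke Theorem \ref{conditionS}. Your write-up is in fact slightly more explicit than the paper's on the well-definedness of $\phi$ and on why the induced map $G/G_x\to G/NH$ is continuous.
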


	Similarly, for $G$-CW complexes, existance of slices are established by Theorem 7.1 of \cite{uribe}. Therefore the following theorem holds.
	
	\begin{thm} \label{oneisoproperCW}
		Let $G$ be a Lie group acting properly  on a $G$-CW complex $X$. Suppose $H$ is a compact subgroup of $G$ (not necessarily normal). If the action of $G$ on $X$ has  only one orbit type $(G/H)$ (all isotropy groups are conjugate to $H$), then the map $\Phi([g,x])=gx$
		$$
		\Phi : G \times_{NH} X^H \rightarrow X
		$$
		is a $G$-homeomorphism. 
	\end{thm}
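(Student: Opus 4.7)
The plan is to follow verbatim the strategy of the proof of Theorem \ref{oneisoproper}, replacing the slice-existence input (Palais, Theorem 2.3.3 of \cite{palais}) by the $G$-CW analogue (Theorem 7.1 of \cite{uribe}), and then invoking Theorem \ref{conditionS} at the end.

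First I would construct the $G$-equivariant map $\phi : X \to G/NH$ exactly as before: for $x\in X$ the isotropy $G_x$ is conjugate to $H$, say $G_x=gHg^{-1}$, and we set $\phi(x)=gNH$; this is well defined because $H$ being compact forces $gHg^{-1}=g'Hg'^{-1}$ to imply $gNH=g'NH$ (using Corollary 1.11.9 of \cite{Kolk} to upgrade an inclusion of compact Lie subgroups with finitely many components to an equality, hence to equality of normalizer cosets). Equivariance of $\phi$ is immediate from $G_{\bar g x}=\bar g G_x \bar g^{-1}$. Moreover $\phi^{-1}(eNH)=X^H$, again using that any subgroup conjugate to $H$ and containing $H$ actually equals $H$.

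Next I would prove continuity of $\phi$ by a local argument at each $x\in X$. Here I replace Palais with the slice theorem for proper actions on $G$-CW complexes (Theorem 7.1 of \cite{uribe}): there is a $G_x$-slice $S\ni x$ giving a $G$-equivariant open embedding $G\times_{G_x}S\hookrightarrow X$. On this tube, every point $[\bar g,s]$ has isotropy contained in $\bar g G_x\bar g^{-1}=\bar g g H g^{-1}\bar g^{-1}$; by the one-orbit-type hypothesis together with the Kolk corollary (applied to compact Lie subgroups with finitely many components), this containment is an equality, so $\phi([\bar g,s])=\bar g g NH$. Thus $\phi$ is constant on the slice, hence continuous on the tube, hence continuous on all of $X$.

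Finally, since condition (S) holds for Lie groups, Theorem \ref{conditionS} applied to the $G$-equivariant map $\phi:X\to G/NH$ yields a $G$-homeomorphism
\[
G\times_{NH}\phi^{-1}(eNH)\;\longrightarrow\;X,\qquad [g,x]\mapsto g\cdot x,
\]
and $\phi^{-1}(eNH)=X^H$ identifies the left-hand side with $G\times_{NH}X^H$, giving the map $\Phi$ in the statement. The main obstacle, and the only genuinely new content compared with Theorem \ref{oneisoproper}, is verifying that the $G$-CW slice theorem of \cite{uribe} produces a slice adequate for the local constancy argument for $\phi$; once that is in hand the remainder of the proof is identical to the completely regular Hausdorff case.
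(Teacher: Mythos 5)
Your proposal matches the paper's intended argument exactly: the paper proves Theorem \ref{oneisoproperCW} by remarking that slices exist for proper actions on $G$-CW complexes by Theorem 7.1 of \cite{uribe}, and then the proof of Theorem \ref{oneisoproper} (construction of the equivariant map $\phi:X\to G/NH$, local constancy on slices, and Theorem \ref{conditionS}) carries over verbatim. Your explicit identification of the slice-theorem substitution as the only new input is precisely the point the paper relies on.
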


	\subsection{K-theory of actions with only one isotropy type}
	
	In this section, we study $G$-equivariant K-theory for actions with only one isotropy type. The are two straightforward types of actions with one isotropy type.
	\begin{enumerate}
		\item  Free actions
		\item Trivial actions
	\end{enumerate}
	
	For free actions of a compact Lie group $G$, we know that the equivariant K-theory is the K-theory of the quotient:
	
	$$
	K^*_G(X) \cong K^*(X/G).
	$$
	
	In the case of trivial actions, the equivariant  K-theory is the tensor product of the group ring with the K-theory of the space, which can be written as a sum over the irreducible representations of $G$ of the K-theory of the space:
	$$
	K^*_G(X) \cong R(G) \otimes K^*(X) \cong \bigoplus_{[\rho] \in   \Irr(G)} K^*(X).
	$$More generally, the $G$-action on $G/H \times X$, acting only on the first component, has one isotropy type, and the equivariant K-theory is:
	$$
	K^*_G(G/H \times X) \cong K^*_H(X) \cong R(H) \otimes K^*(X) \cong \bigoplus_{[\rho] \in  \Irr(H)} K^*(X).
	$$

	We will prove a decomposition formula for actions with only one isotropy type that generalizes the previous examples.
	
	If $G$ is a Lie group satisfying the assumption (K) and acting properly on the space $X$
	with only one orbit type $(G/H)$, where $H$ is a compact subgroup of $G$, we know that there is a $G$-homeomorphism
	$$
	\Phi: G \times_{NH} X^H \rightarrow X.
	$$
	
	Since $H$ is closed, the normalizer $NH$ is a closed subgroup. From the $G$-homeomorphism and induction structure of K-theory, it follows that:
	$$
	K_G^*(X) \cong  K^*_G(G\times_{NH} X^H) \cong K_{NH}^*(X^H).
	$$
	
	$H$ is a closed normal subgroup of $NH$ that acts trivially on $X^H$. From \ref{decompintro} there is a decomposition
	$$
	K_{NH}^*(X^H) \cong  \bigoplus_{[\rho] \in NH \backslash \Irr(H)}
	{}^{\widetilde{W}_{[\rho]}} K^*_{W_{[\rho]}}(X^H),$$
	where the sum is over  representatives of the orbits of the 
	$NH$-action on the set of isomorphism classes of irreducible $H$-representations, and $W_{[\rho]}=NH_{[\rho]}/H$.

	We have an extension of Lie groups
	$$
	1  \to H\stackrel{\iota}\rightarrow NH\stackrel{\pi}\rightarrow NH/H \to  1
	$$
	and for each $\rho : H \to U(V_{\rho})$  complex, finite-dimensional, 
	irreducible representation of $H$, there are central extensions
	\[
	1 \to S^{1} \stackrel{\widetilde{\iota}}{\rightarrow} \widetilde{W}_{[\rho]} \rightarrow W_{[\rho]} \to 1.
	\]
	
	Now, under the conditions of only one isotropy type, $NH/H$ acts freely on $X^H$ and therefore $W_{[\rho]}$ acts freely on $X^H$. Thus
	$$
	{}^{\widetilde{W}_{\rho}} K^*_{W_\rho}(X^H) \cong {}^{\alpha_{[\rho]}} K^*(X^H/W_{[\rho]})
	$$
	where $\alpha_{[\rho]}$ is the twisting coming from the central extension as in Section \ref{proper$K$-theory}. 

	$NH_{[\rho]}$ is a stabilizer of the action of $NH$ on $\Irr(H)$, which is a discrete space. Therefore $NH/NH_{[\rho]}$ is a discrete space. Thus,  there is a covering space: 
	$$
	\xymatrix{
		X^H / NH_{[\rho]} \ar[d] \\
		X^H/NH
	}.
	$$

	But $G \times_{NH} X^H$ is $G$-homeomorphic to $X$ and therefore 
	$$
	X/G \cong X^H / NH
	$$
	and there is a covering space $X^H/NH_{[\rho]} \rightarrow X/G$.

	Since $H$ acts trivially on $X^H$, 
	$$
	X^H/(NH/H) = X^H /NH \cong X/G
	$$
	and $X^H/NH_{[\rho]} = X^H/W_{[\rho]}$. These observations put together result in
	$$
	K_G^*(X) \cong K_{NH}^*(X^H) \cong \bigoplus_{[\rho] \in NH \backslash \Irr(H)}
	{}^{\widetilde{W}_{[\rho]}} K^*_{W_{[\rho]}}(X^H)  \cong  \bigoplus_{[\rho] \in NH \backslash \Irr(H)}   {}^{\alpha_{[\rho]}} K^*(X^H/{W}_{[\rho]})
	$$
	and $X^H/ {W}_{[\rho]} = X^H/NH_{[\rho]} \rightarrow X^H/NH = X/G$ is a covering space with fiber $NH/NH_{[\rho]}$.
	
	All these observations  provide a more general version of Theorem 7 in \cite{wassermann}, which is applicable to actions of compact Lie groups with only one orbit type.
	
	\begin{thm} \label{oneiso}
		Supppose the Lie groups  $G$ and $NH$ satisfy assumption (K) and  act properly  on a locally compact Hausdorff second countable space $X$  with only one orbit type $(G/H)$. Then 
		$$
		K_G^*(X) \cong   \bigoplus_{[\rho] \in NH \backslash \Irr(H)}   {}^{\alpha_{[\rho]}} K^*(X^H/NH_{[\rho]})
		$$
		where for every irreducible representation $\rho$ of $H$ there is a covering space of degree $[NH_{[\rho]}:NH]$
		$$
		X^H/NH_{[\rho]} \rightarrow X/G
		$$
		and a central extension 
		\[
		1 \to S^{1} \stackrel{\widetilde{\iota}}{\rightarrow} \widetilde{W}_{[\rho]} \rightarrow W_{[\rho]} \to 1
		\]
		classified by a cohomology class $\alpha_{\widetilde{W_{[\rho]}}} \in H^2(BW_{[\rho]};S^1)$, and
		$\alpha_{[\rho]} \in H^2(X^H/NH_{[\rho]};S^1)$ is the pullback $p^*(\alpha_{\widetilde{W_{[\rho]}}})$ with 
		$$
		\xymatrix{
			EW_{[\rho]} \times_{W_{[\rho]}} X^H \ar[rr]^{p} \ar[d]^{\simeq} & & BW_{[\rho]} \\
			X^H/NH_{[\rho]} \ar[urr]
		}
		$$
		
	\end{thm}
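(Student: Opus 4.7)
The plan is to replicate the chain of isomorphisms used in the proof of Theorem \ref{oneiso}, replacing each ingredient with its proper-action analog. Note that the hypotheses force $H$ to be compact, since it is an isotropy group of a proper action.

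First, I would invoke Theorem \ref{oneisoproper}, whose hypotheses are met ($X$ locally compact Hausdorff second countable is in particular completely regular Hausdorff, and $H$ is compact), to produce a $G$-homeomorphism $G\times_{NH} X^H \cong X$. Combining this with the induction isomorphism $\mathbb{K}_G^*(G\times_{NH}Y)\cong \mathbb{K}_{NH}^*(Y)$ for proper actions (Lemma 3.4 of \cite{lo}, available because both $G$ and $NH$ satisfy (K)), one obtains
$$K_G^*(X)\cong K_{NH}^*(X^H).$$

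Second, $H\trianglelefteq NH$ is a compact normal subgroup acting trivially on $X^H$, and $NH$ satisfies (K) by hypothesis. Theorem \ref{decomp} then decomposes
$$K_{NH}^*(X^H)\cong \bigoplus_{[\rho]\in NH\setminus \Irr(H)} {}^{\widetilde{W}_{[\rho]}}K^*_{W_{[\rho]}}(X^H).$$
Third, Theorem \ref{oneisoproper} also tells us that $NH/H$ acts freely on $X^H$, and hence so does its subgroup $W_{[\rho]}=NH_{[\rho]}/H$. Applying the identification of twisted equivariant $K$-theory of a free proper action with classical twisted $K$-theory of the quotient (discussed in the paragraphs preceding Theorem \ref{decomp}) gives
$${}^{\widetilde{W}_{[\rho]}}K^*_{W_{[\rho]}}(X^H) \cong {}^{\alpha_{[\rho]}}K^*(X^H/W_{[\rho]}),$$
with $\alpha_{[\rho]}=p^*(\alpha_{\widetilde{W_{[\rho]}}})$ as in the statement. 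Since $H$ acts trivially on $X^H$ one has $X^H/W_{[\rho]}=X^H/NH_{[\rho]}$, and the map $X^H/NH_{[\rho]}\to X^H/NH\cong X/G$ is a covering of degree $[NH:NH_{[\rho]}]$ because the $NH$-orbit of $[\rho]$ in the discrete space $\Irr(H)$ has that cardinality.

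I expect the main technical obstacle to sit in the first step: the induction isomorphism for proper actions of general Lie groups is more delicate than its compact counterpart and is exactly what forces the joint (K) assumption on $G$ and $NH$. Once that isomorphism is secured, the remaining steps are essentially formal concatenations of Theorems \ref{decomp} and \ref{oneisoproper} together with the free-action identification, and the only remaining care required is to track the twisting class $\alpha_{[\rho]}$ faithfully through the identifications so that it matches the pullback description given in the statement.
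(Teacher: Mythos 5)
Your proposal is correct and follows essentially the same route the paper indicates (the paper only remarks that the result follows "by using Theorems \ref{decomp} and \ref{oneisoproper}"): the $G$-homeomorphism $G\times_{NH}X^H\cong X$, the induction isomorphism, the decomposition of $K^*_{NH}(X^H)$ over $NH\backslash\Irr(H)$, and the free-action identification with classical twisted $K$-theory. The only nitpick is that the freeness of the $NH/H$-action on $X^H$ is not literally part of the statement of Theorem \ref{oneisoproper}; it follows from the observation inside its proof that $x\in X^{gHg^{-1}}$ if and only if $G_x=gHg^{-1}$, so that $G_x=H$ for $x\in X^H$.
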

	
	Theorem \ref{oneiso} generalizes Theorem 3.9 of \cite{GU}.

	\section{Examples}\label{sectionaa}
	In this section, we show some interesting examples using our Theorem \ref{decomp}. The Example \ref{ejemploandres} also uses results from Section \ref{sectionb}.
	\begin{exam}\rm
		Consider the group
		$$
		D_8= \mathbb{Z}/4 \rtimes \mathbb{Z}_2 = \left\langle a , b \mid a ^ { 4 } = b ^ { 2 } = e , b a b ^ { - 1 } = a ^ { - 1 } \right\rangle
		$$
		Let $\rho$ be the irreducible representation of $\mathbb{Z}/4$ given by counterclockwise rotation by $\pi/2$
		Take $A=\mathbb{Z}/4$. The group $D_8$ acts on
		$$
		\operatorname { Irr } ( \mathbb { Z }/4 ) = \{ [1] , [\rho] , [\rho ^ { 2 }] , [\rho ^ { 3 }] \}$$
		with orbits
		$$D _ { 8 } \backslash \operatorname { Irr } ( \mathbb { Z }/4 ) = \{ \{ [1] \} , \left\{ [\rho] , [\rho ^ { 3 }] \right\} , \left\{ [\rho ^ { 2 }] \right\} \}. 
		$$
	For the representations $[1]$ and $[\rho^2]$, we have that $G_{[1]} =G_{[\rho^2]}= D_8$, and therefore $Q_{[1]}=Q_{[\rho^2]} = D_8 /\mathbb{Z}/4 =\mathbb{Z}/2$. For the representation $[\rho]$, we have that $G _ { [ \rho ] } = \mathbb {Z}/4$, and therefore $Q_{ [ \rho ] }= 1$. As $\rho$ is 1-dimensional, Remark \ref{abelian-kernel} implies that twistings are trivial.

		If $X$ is a space with an action of $D_8$ that restricts to the  trivial action of $ \mathbb{Z}/4$, then
		$$
		K_{D_8}^*(X) \cong K_{\mathbb{Z}/2}^* (X) \oplus K^*(X) \oplus K_{\mathbb{Z}/2}^* (X).
		$$
		
		Compare with example 6.1 of \cite{GU}.
		
	\end{exam}

	\begin{exam}\rm\label{ejemploQ8}
		Consider the quaternions $Q_8= \{ \pm 1, \pm i, \pm j, \pm k\}$. It is a central extension of $\mathbb{Z}/2 \times \mathbb{Z}/2$ by $A=\{1,-1\}$:
		\begin{equation*}
		0\to A \to Q_8\to \mathbb{Z}/2 \times \mathbb{Z}/2\to 0.\end{equation*}
		
		Since $A$ is central, $Q_8$ acts trivially on $\operatorname { Irr } ( A ) = \{1,\sign\}$ with $G_{[1]} =G_{[\sign]}= Q_8$ and $Q_{[1]}=Q_{[\sign]} = Q_8 /A =\mathbb{Z}/2 \times \mathbb{Z}/2$.
		
		The trivial representation extends to $Q_8$ then by Prop. \ref{notwist} the twist is trivial; but the $\sign$ representation does not extend to $Q_8 = G_{[\sign]}$. Therefore, again by Prop. \ref{notwist} there is a nontrivial twisting. If $X$ is a space with an action of $Q_8$ with trivial action of $A$, then
		$$
		K_{Q_8}^*(X) \cong K_{\mathbb{Z}/2 \times \mathbb{Z}/2 }^*(X) \oplus {}^{\widetilde{Q}_{[\sign]}} K_{\mathbb{Z}/2 \times \mathbb{Z}/2 }^*(X)
		$$
		and the twisting is represented by the nontrivial element of
		$H^3(\mathbb{Z}/2 \times \mathbb{Z}/2 ; \mathbb{Z}) = \mathbb{Z}/2 $.
		
		Consider the projection $Q_8 \rightarrow \mathbb{Z}/2 \times \mathbb{Z}/2$, and consider the action of $\mathbb{Z}/2 \times \mathbb{Z}/2$ on $\mathbb{S}^2 \times \mathbb{S}^2$. Consider the decomposition 
		$$
		K_{Q_8}^*(\mathbb{S}^2 \times \mathbb{S}^2) \cong K_{\mathbb{Z}/2 \times \mathbb{Z}/2 }^*(\mathbb{S}^2 \times \mathbb{S}^2) \oplus {}^{\widetilde{Q}_{[\sign]}} K_{\mathbb{Z}/2 \times \mathbb{Z}/2 }^*(\mathbb{S}^2 \times \mathbb{S}^2).
		$$
		Since the action is free, 
		$$
		K_{\mathbb{Z}/2 \times \mathbb{Z}/2 }^*(\mathbb{S}^2 \times \mathbb{S}^2) \cong K^*(\mathbb{RP}^2 \times \mathbb{RP}^2 ) 
		$$
		and
		$$
		{}^{\widetilde{Q}_{[\sign]}} K_{\mathbb{Z}/2 \times \mathbb{Z}/2}^*(\mathbb{S}^2 \times \mathbb{S}^2) \cong {}^\alpha K^*(\mathbb{RP}^2 \times \mathbb{RP}^2 ) 
		$$
		where $\alpha$ is the nontrivial class of $H^3(\mathbb{RP}^2 \times \mathbb{RP}^2;\mathbb{Z})=\mathbb{Z}/2$.

		From the Kunneth's theorem for K-theory \cite{AtiyKunneth}, there is a short exact sequence: 
		$$
		0 \rightarrow \bigoplus_{i+j=*} K^i(\mathbb{RP}^{2}) \otimes K^j(\mathbb{RP}^{2}) \rightarrow K^*(\mathbb{RP}^{2} \times \mathbb{RP}^{2}) \rightarrow \operatornamewithlimits{Tor}\limits_{i+j=*+1}(K^i(\mathbb{RP}^{2}), K^j(\mathbb{RP}^{2})) \rightarrow 0.
		$$
		
		For $*=0$, there is no $\Tor$ term and therefore the K-theory is just the tensor product, which is $(\mathbb { Z } \oplus \mathbb{Z}/2) \otimes (\mathbb { Z } \oplus \mathbb{Z}/2) = \mathbb { Z } \oplus (\mathbb{Z}/2)^3 $.
		
		For $*=1$ there is no tensor term and the K-theory is the $\Tor$ part, which is $Tor ((\mathbb { Z } \oplus \mathbb{Z}/2),(\mathbb { Z } \oplus \mathbb{Z}/2)) = \mathbb{Z}/2 $
		
		$$
		K^*(\mathbb{RP}^{2} \times \mathbb{RP}^{2}
		) \cong \left\{ \begin{array} { c l } { \mathbb { Z } \oplus \left ( \mathbb{Z}/2 \right ) ^3 , } & { * = 0 } \\ { \mathbb{Z}/2 , } & { *=1 } \end{array} \right..
		$$

		Let us compute the twisted K-theory ${}^\alpha K^*(\mathbb{RP}^2 \times \mathbb{RP}^2 )$ with the twisted Atiyah-Hirzebruch spectral sequence \cite{AtiySegAHSS}.
		With coefficients $\Z/2$ we have
		$$
		H^*(\mathbb{RP}^2 \times \mathbb{RP}^2;\Z/2) \cong \Z/2[x,y]/(x^3,y^3).
		$$

		On the other hand, for the cohomology  with integer coefficients,
		$$
		H^*(\mathbb{RP}^2 \times \mathbb{RP}^2;\mathbb{Z}) \cong \mathbb Z[x_2,y_2,z_3]/ (2x_2,2y_2,2z_3,x_2^2,y_2^2,z_3^2,x_2z,y_2z_3)
		$$
		with $x_2$ and $y_2$ having degree $2$ and $x_2 \Mod{2} =x^2$, $y_2 \Mod{2} = y^2$,  $z_3$ having degree $3$ and $z_3 \Mod{2} = x^2y+xy^2$. We have the following components in each degree
		\begin{center}
			\begin{tabular}{ |c|c|c|c|c|c|c| }
				\hline
				dim 0 & dim 1 & dim 2 & dim 3 & dim 4 & dim 5 & dim 6  \\ 
				\hline
				$\mathbb{Z}$ &  $0$ & $(\Z/2)^2$ & $\Z/2$  & $\Z/2$ & $0$ & $0$ \\ \hline
			\end{tabular}
		\end{center}
		Then the third page of the Atiyah-Hirzebruch spectral sequence for twisted K-theory looks like
		
		$$
		\begin{array}{c|ccccccc}
		\vdots & \vdots & \vdots & \vdots & \vdots & \vdots & \vdots & \vdots \\
		2 &  \mathbb{Z} &  0 & \Z/2 \oplus \Z/2   & \Z/2 & \Z/2 & 0 & 0  \\
		1 & 0 & 0 & 0 & 0 & 0 & 0 & 0 \\
		0 & \mathbb{Z} &  0 & \Z/2 \oplus \Z/2   & \Z/2 & \Z/2 & 0 & 0\\
		-1 & 0 & 0 & 0 & 0 & 0 & 0 & 0 \\
		-2 & \mathbb{Z} &  0 & \Z/2 \oplus \Z/2   & \Z/2 & \Z/2 & 0 & 0\\
		\vdots & \vdots & \vdots & \vdots & \vdots & \vdots & \vdots & \vdots \\
		\hline & 0 & 1 & 2 & 3 & 4 & 5 & 6 
		\end{array}
		$$
		where
		$d^3 :  H^0 \to H^3$ sends 1 to $\alpha$
		and		$d^3 : H^1 \to H^4$ is zero. 
		 The $E_4$-page is the $E_\infty$-page and gives
		$$
		E_\infty^{0,2k} = 2 \mathbb{Z}, E_\infty^{2,2k} = \Z/2\oplus \Z/2, E_\infty^{3,2k} = 0, E_\infty^{4,2k} = \Z/2
		$$ 
		Let us see what the result is for ${}^\alpha K^*(X)$. Working backwards, $F^4 ({}^\alpha K^{0}(\mathbb{RP}^2 \times \mathbb{RP}^2)) = \Z/2$ which is also $F^3$, and then $F^2 / F^3 \cong \Z/2 \oplus \Z/2$, so we need to find possible extensions
		$$
		\{0\}\rightarrow \Z/2 \rightarrow F^2 ({}^\alpha K^{0}(\mathbb{RP}^2 \times \mathbb{RP}^2)) \rightarrow \Z/2 \oplus \Z/2  \rightarrow \{0\}
		$$
		which could be $\Z/4 \oplus \Z/2$ or $\Z/2 \oplus \Z/4$ or $(\Z/2)^3$.

		So $F^2=F^1$, and thus we have the reduced K-theory equal to $(\Z/2)^3$ or $\Z/2 \oplus \Z/4$
		
		Also note that in the extension
		$$
		\{0\}\rightarrow F^1 \rightarrow F^0 ({}^\alpha K^{0}(\mathbb{RP}^2 \times \mathbb{RP}^2)) \rightarrow \mathbb{Z}  \rightarrow \{0\}
		$$
		since $\mathbb{Z}$ es free, the extension is trivial. Therefore,
		
		$$
		{}^\alpha K^0(\mathbb{RP}^2 \times \mathbb{RP}^2) \cong \mathbb{Z} \oplus (\Z/2)^3
		$$
		or
		$$
		{}^\alpha K^0(\mathbb{RP}^2 \times \mathbb{RP}^2) \cong \mathbb{Z} \oplus \Z/4 \oplus \Z/2.
		$$
		
		For $*=1$, the only $E_\infty$-page that contributes is $E_\infty^{3,-2}=0$
		which gives
		$$
		{}^\alpha K^1(\mathbb{RP}^2 \times \mathbb{RP}^2) = 0.
		$$
		
		To be able to resolve the extension problems, we will use twisted K-theory with $\mathbb{Z}/2$-coefficients denoted by ${}^{\alpha}K^*(X;\Z/2)$. 
		
		
		 Twisted K-theory  $ { }^\alpha K^n ( X)$ can be realized as the K-theory of the C*-algebra $A_{\alpha}$ of sections of certain bundle of compact operators associated to $\alpha$. For details see Prop. 2.1 in \cite{rosenberg-ct}. Along this lines, if $C_2$ is the C*-algebra defined in \cite[Eqn. (1.1)]{schochet-IV}. We have an isomorphism
  $${}^\alpha K^n \left( X ;\Z/2 \right)\cong K_n(A_{\alpha}\otimes C_2).$$
  The universal coefficent theorem for K-theory of C*-algebras (\cite{rosenberg-schochet}) gives us the following sequence for $n=0$.
		\begin{align*} 0 \rightarrow {}^ { \alpha }K ^{ 0 }  ( \mathbb{RP}^2 \times \mathbb{RP}^2) \otimes \Z/2 &\rightarrow {}^\alpha K^0 \left(\mathbb{RP}^2\times\mathbb{RP}^2;\Z/2 \right) \rightarrow \\&\operatorname { Tor } _ { 1 } \left( {}^ { \alpha}K ^ {   1 }(\mathbb{RP}^2 \times \mathbb{RP}^2 )    , \Z/2 \right)\to0.\end{align*}
		Since 
		$$
		{}^\alpha K^1(\mathbb{RP}^2 \times \mathbb{RP}^2) = 0
		$$
		this gives $ {}^ { \alpha }K ^{ 0 }  ( \mathbb{RP}^2 \times \mathbb{RP}^2) \otimes \Z/2 \cong  {}^\alpha K^0 \left( \mathbb{RP}^2\times\mathbb{RP}^2 ;\Z/2 \right)$.
		
		Let us compute ${}^\alpha K^0 \left( \mathbb{RP}^2\times\mathbb{RP}^2 ;\Z/2 \right)$  using the twisted Atiyah Hirzebruch spectral sequence. 
		
		$$
		H^*(\mathbb{RP}^2 \times \mathbb{RP}^2;\Z/2) \cong \Z/2[x,y]/(x^3,y^3).
		$$
		On each degree we have
		
		\begin{center}
			\begin{tabular}{ |c|c|c|c|c|c|c| }
				\hline
				dim 0 & dim 1 & dim 2 & dim 3 & dim 4  \\ 
				\hline
				$\Z/2$ &  $(\Z/2)^2$ & $(\Z/2)^3$ & $(\Z/2)^2 $ & $\Z/2$  \\ \hline
				1 & $x,y$& $x^2,xy,y^2$ & $x^2y,y^2x$ & $x^2y^2$ \\
				\hline
			\end{tabular}
		\end{center}
		And the third page of the spectral sequence looks like
		
		$$
		\begin{array}{c|ccccccc}
		\vdots & \vdots & \vdots & \vdots & \vdots & \vdots & \vdots & \vdots \\
		2 &  \Z/2 &  (\Z/2)^2 & (\Z/2)^3  & (\Z/2)^2 & \Z/2 & 0 & 0  \\
		1 & 0 & 0 & 0 & 0 & 0 & 0 & 0 \\
		0 &  \Z/2 &  (\Z/2)^2 & (\Z/2)^3  & (\Z/2)^2 & \Z/2 & 0 & 0 \\
		-1 & 0 & 0 & 0 & 0 & 0 & 0 & 0 \\
		-2 &  \Z/2 &  (\Z/2)^2 & (\Z/2)^3  & (\Z/2)^2 & \Z/2 & 0 & 0 \\
		\vdots & \vdots & \vdots & \vdots & \vdots & \vdots & \vdots & \vdots \\
		\hline & 0 & 1 & 2 & 3 & 4 & 5 & 6 
		\end{array}
		$$
  
        Let us denote by  $Q_i$ the Milnor primitives at prime $2$, defined inductively by $Q_0=Sq^1$ and $ Q _ { j + 1 } = S q ^ { 2 ^ { j+1 } } Q _ { j } - Q _ { j } S q ^ { 2 ^ { j+1 } } $. Then the differentials are given by
		\[ d _ { 2 ^ { n + 1 } - 1 } \left( x v _ { n } ^ { k } \right) = \left( Q _ { n } ( x ) + ( - 1 ) ^ { | x | } x \cup \left( Q _ { n - 1 } \cdots Q _ { 1 } ( H ) \right) \right) v _ { n } ^ { k - 1 } \]
		(from \cite{sati}) and for $n=1$,
		$$ d _ { 3 } \left( x v _ { 1 } ^ { k } \right) = \left( Q _ { 1} ( x ) +  x \cup   H \Mod{2}  \right) v _ { 1 } ^ { k - 1 }
		$$
		
		Therefore $Q_1= Sq^2Sq^1-Sq^1Sq^2$. On the other hand, $Sq^1Sq^2 = Sq^3$ (see for example Mosher-Tangora \cite{mosher} page 23), making $Q_1 = Sq^2Sq^1+Sq^3$.\\
		Thus we conclude:
		$$
		d^3(x) = Q_1(x) + x \cup  H \Mod{2}.
		$$
		In our example, $Q_1(1)=0$ and\\
		$$Q_1(x)=Sq^2Sq^1(x)=Sq^2(x^2)=x^4=0,$$ $$Q_1(y)=Sq^2Sq^1(y)=Sq^2(y^2)=y^4=0,$$
		therefore $d^3 :  H^0 \to H^3$ sends 1 to $\alpha \Mod{2} = x^2y+yx^2$;
		$d^3 : H^1 \to H^4$ sends $x \to x (x^2y+y^2x) = y^2x^2 $ and $y \to y (x^2y+y^2x) = y^2x^2 $; thus $x+y \in \ker(d^3)$.

		The $E_4$-page is the $E_\infty$-page and gives
		$$
		\begin{array}{c|ccccccc}
		\vdots & \vdots & \vdots & \vdots & \vdots & \vdots & \vdots & \vdots \\
		2 &  0 &  \Z/2 & (\Z/2)^3  & \Z/2 & 0& 0 & 0  \\
		1 & 0 & 0 & 0 & 0 & 0 & 0 & 0 \\
		0 &  0 &  \Z/2 & (\Z/2)^3  & \Z/2 & 0 & 0 & 0 \\
		-1 & 0 & 0 & 0 & 0 & 0 & 0 & 0 \\
		-2 &  0 &  \Z/2 & (\Z/2)^3  & \Z/2 & 0 & 0 & 0 \\
		\vdots & \vdots & \vdots & \vdots & \vdots & \vdots & \vdots & \vdots \\
		\hline & 0 & 1 & 2 & 3 & 4 & 5 & 6 
		\end{array}
		$$

		$$
		E_\infty^{1,2k} = \Z/2, E_\infty^{2,2k} =  (\Z/2)^3, E_\infty^{3,2k} = \Z/2.
		$$ 
		In this manner we obtain:
		$$
		{}^\alpha K^{ 0 } \left( \mathbb{RP}^2 \times \mathbb{RP}^2 ; \Z/2\right ) \cong (\Z/2)^3
		$$
		and
		$$
		{}^\alpha K^{ 1 } \left( \mathbb{RP}^2 \times \mathbb{RP}^2 ; \Z/2\right ) \cong (\Z/2)^2.
		$$
		We have either
		$$
		{}^\alpha K^0(\mathbb{RP}^2 \times \mathbb{RP}^2) \cong \mathbb{Z} \oplus (\Z/2)^3
		$$
		or
		$$
		{}^\alpha K^0(\mathbb{RP}^2 \times \mathbb{RP}^2) \cong \mathbb{Z} \oplus \Z/4 \oplus \Z/2
		$$
		but tensoring with $\Z/2$ we see that the only possibility consistent with the  universal coefficient theorem, is
		$$
		{}^\alpha K^0(\mathbb{RP}^2 \times \mathbb{RP}^2) \cong \mathbb{Z} \oplus \Z/4 \oplus \Z/2.
		$$
		Summarizing, 
		$$
		{}^\alpha K^0(\mathbb{RP}^2 \times \mathbb{RP}^2) \cong \left\{ \begin{array} { c l } { \mathbb{Z} \oplus \Z/4 \oplus \Z/2,} & { * = 0 } \\ { 0 , } & { *=1 } \end{array} \right.,
		$$
		and all of these observations put together result in
		$$
		K^*_{Q_8}(\mathbb{S}^{2} \times \mathbb{S}^{2}
		) \cong \left\{ \begin{array} { c l } { \left (\mathbb { Z } \right)^2 \oplus \left ( \mathbb{Z}/2 \right ) ^4 \oplus \mathbb{Z}/4 , } & { * = 0 } \\ { \mathbb{Z}/2 , } & { *=1 } \end{array} \right..
		$$
		Compare with example in page 22 in \cite{wassermann} and  with example 6.2 of \cite{AGU}, where $K^*_{D_8}(\mathbb{S}^{\infty} \times \mathbb{S}^{\infty})$ is computed with the help of the Atiyah-Segal completion theorem.  
		
	\end{exam}

	\begin{exam}\rm
	    Consider the Lie group $SU(2)$. The \emph{spinor map} $\Spin:SU(2)\rightarrow SO(3)$ whose value at $u\in SU(2)$ is the linear transformation $u\vec{x}u^*$, where $\vec{x}\in \mathfrak{su}(2)\cong \R^3$. On the other side, the group $SO(3)$ acts on itself via the action defined as $s\cdot a=sas^{-1}$, for all $s,a\in SO(3)$. By composing these two maps, we define an action of $SU(2)$ over $SO(3)$ as $u\cdot a=\Spin(u)a\Spin(u)^{-1}$ for each $u\in SU(2)$ and $a\in SO(3)$. Due to 
	    $$\Spin(u)=\begin{pmatrix} 1&0&0\\
	    0&1&0\\
	    0&0&1\end{pmatrix} \mbox{  if and only if  }
	    u=\pm \begin{pmatrix} 1&0\\
	    0&1 \end{pmatrix}$$the kernel of $\Spin$ can be identified with the group $\Z/2$. Applying Theorem \ref{decomp} with $G=SU(2)$, $A=\{Id, -Id\}$ and $X=SO(3)$, we observe that, in the same way as the Example \ref{ejemploQ8}, $SU(2)$ acts trivially on $\operatorname { Irr } ( \Z/2 ) = \{1,\sign\}$ with $G_{[1]} =G_{[\sign]}= SU(2)$ and $Q_{[1]}=Q_{[\sign]} = SU(2) /(\mathbb{Z}/2) =SO(3)$. While the trivial representation extends to $SU(2)$, the $\sign$ representation does not extend to $SU(2)$. Otherwise, we should have a non-trivial homomorphism $s$ from $SU(2)$ to $\Z/2$. That means we should obtain a split central extension:
	    $$1\to\ker(s)\rightarrow SU(2)\xrightarrow{s} \Z/2\rightarrow 1.$$
     But this is not possible because $\ker(s)$ should have to be a subgroup of $SU(2)$ of the same dimension, so we should get $SU(2)\cong \Z/2\times SU(2)$. This is a contradiction.
  Therefore, there is a nontrivial twisting $\widetilde{Q}_{[\sign]}$ represented by the nontrivial element of
		$H^3(SO(3) ; \mathbb{Z}) = \mathbb{Z}/2 $. Then, by Theorem \ref{decomp}:
	    $$K_{SU(2)}^*(SO(3))\cong K_{SO(3)}^*(SO(3))\oplus {}^{\widetilde{Q}_{[\sign]}}K_{SO(3)}^*(SO(3)),$$ where the induced action from $SO(3)$ on itself is given by conjugation. The detailed computations for the right-hand side of the equation can be found in \cite{FHT}, Section 9.
	\end{exam}

	\begin{exam}\rm
		Let $\St_n(R)$ be the Steinberg group \cite{steinberg}  associated to a commutative ring $R$, and consider the central extension
		\begin{equation}\label{ext}0\to\Z/2\to\St_n(\Z)\to\Sl_n(\Z)\to 0.\end{equation}
		Note that if we take a finite $\Sl_n(\Z)$-CW-complex model of $\underbar{E}\Sl_n(\Z)$, this space with the induced action of $\St_n(\Z)$ is a finite $\St_n(\Z)$-CW-complex model for $\underbar{E}\St_n(\Z)$. 
		
		Moreover, as the extension is central, the action of $\,\Sl_n(\Z)$ over $\Irr(\Z/2)=\{[1],[\sign]\}$ is trivial. and by Remark \ref{abelian-kernel} the $S^1$-central extensions corresponding to each irreducible representation area trivial. On the other hand in \cite{BaVe2016}, calculations of  equivariant (twisted)  K-theory groups for $\underbar{E}\Sl_3(\IZ)$ were obtained.  Then using Theorem \ref{decomp}, we have
		\begin{align*}K_{\St_3(\IZ)}^*(\underbar{E}\St_3(\IZ))\cong & K_{\Sl_3(\IZ)}^*(\underbar{E}\Sl_3(\IZ))\oplus K_{\Sl_3(\IZ)}^*(\underbar{E}\Sl_3(\IZ))\\&\cong \begin{cases}
		\IZ^{16}&\text{ if $*$ is even}\\
		0 &\text{ if $*$ is odd.}
		\end{cases}\end{align*}
	\end{exam}
	\begin{exam}\rm\label{ejemploandres}
		Let us look at actions of $SU(2)$ on simply connected $5$-dimensional manifolds with only one isotropy type and orbit space $S^2$. These manifolds have been classified up to equivariant diffeomorphism in  \cite{Simas}. There can only be three orbit types: $(e)$, $(SU(2)/\mathbb{Z}_2)$ and $(SU(2)/\mathbb{Z}_m)$. We will describe now the possible equivariant diffeomorphic types:

		Consider for natural numbers $l,m$ with $(l,m)=1$,
		$$
		\mathcal { N } _ { m , m } ^ { l } = \mathrm { SU } ( 2 ) \times _ {{ S } ^ { 1 }}   S  ^ { 3 } 
		$$
		with $S^1$ acting on $SU(2) \times S^3$ by 
		$$
		z \cdot (g,(w_1,w_2)) = (g z^{-l}, (z^m w_1, z^m w_2))
		$$
		and $\mathrm{SU}(2)$ acting on $\mathcal { N } _ { m , m } ^ { l }$ by
		$$
		\overline{g} \cdot [g, (w_1,w_2)] = [\overline{g}g,(w_1,w_2)]
		$$
		
		\begin{itemize}
			\item $\mathcal { N } _ { 1 , 1 } ^ { 0}=SU(2) \times S^2$ is the free $SU(2)$-manifold. In this case $$K_{SU(2)}^*(\mathcal{N}_{1,1}^0)\cong K^*(S^2)\cong \mathbb{Z}[H]/(H-1)^2$$ concentrated in degree 0.
			\item  $SO(3) \times S^2$ and $\mathcal { N } _ { 2 , 2 } ^ { 1}$ are the $SU(2)$-manifolds with isotropy  type $(SU(2)/\Z/2)$. $SO(3) \times S^2$ is not simply connected, but $\mathcal { N } _ { 2 , 2 } ^ { 1}$ is in fact simply connected.
		\end{itemize}
		
		For  isotropy type $(SU(2)/\mathbb{Z}_m)$, with $m\geq 3$, we have the manifolds
		$$
		\mathcal { N } _ { m , m } ^ { l }.
		$$ 
		
		Actually, by the theorem of Barden-Smale about  $5$-dimensional simply connected manifolds in \cite{Simas}, Proposition 3 it is proved that:
		$$
		\mathcal { N } _ { m , m } ^ { l } \cong S^3 \times S^2
		$$
 and in Lemma  10 that
 $$
 \mathcal{ N } _ { m , m } ^ { l }/SU(2) \cong S^2.
 $$
		To use theorem \ref{oneiso} we need to know the normalizers and the Weyl groups, from \cite{Simas} Table 1, we have:

		$$
		\begin{array}{l|lll} 
		H & 1 & \Z/2 & \mathbb{Z}/m \\
		NH & SU(2) & SU(2) & Pin(2)  \\
		NH/H & SU(2) & SO(3) & Pin(2) 
		\end{array}
		$$
		
		

		%

		Let us consider the case $m \geq 3$. The normalizer in this case is $N\mathbb{Z}/m=Pin(2)$. 
		By theorem \ref{oneiso}
		
		$$
		K_{SU(2)}^*(\mathcal { N } _ { m , m } ^ { l }  ) \cong   \bigoplus_{[\rho] \in Pin(2) \backslash \Irr(\mathbb{Z}/m)}   {}^{\alpha_{[\rho]}} K^*(\left (\mathcal { N } _ { m , m } ^ { l } \right )^{\mathbb{Z}/m}/Pin(2)_{[\rho]})
		$$
		where for every irreducible representation $\rho$ of $\mathbb{Z}/m$ there is a covering space  
		$$
		\left ( \mathcal { N } _ { m , m } ^ { l }\right ) ^{\mathbb{Z}/m}/Pin(2)_{[\rho]} \rightarrow \mathcal { N } _ { m , m } ^ { l }/SU(2)
		$$
		of degree $[Pin(2)_{[\rho]}:Pin(2)]$. But $\mathcal{ N } _ { m , m } ^ { l }/SU(2) \cong S^2$, which is simply connected, and therefore the nontrivial coverings are disconnected,
		$$
		\left ( \mathcal { N } _ { m , m } ^ { l }\right ) ^{\mathbb{Z}/m}/Pin(2)_{[\rho]} \cong S^2 \times [Pin(2)_{[\rho]}:Pin(2)].
		$$
		Since $S^2$ is $2$-dimensional, there are no twistings.

		Therefore
		$$
		K_{SU(2)}^*(\mathcal { N } _ { m , m } ^ { l }  )  \cong \bigoplus_{[\rho] \in Pin(2) \backslash \Irr(\mathbb{Z}/m)}    K^*(S^2 \times [Pin(2)_{[\rho]}:Pin(2)])   
		$$
		which is just
		$$
		\bigoplus_{[\rho] \in Pin(2) \backslash \Irr(\mathbb{Z}_m)} \bigoplus_{[Pin(2)_{[\rho]}:Pin(2)]}  K^*(S^2) \cong  \bigoplus_{[\rho] \in  \Irr(\mathbb{Z}/m)}  K^*(S^2)
		$$
		This can be written as
		
		$$
		K_{SU(2)}^*(\mathcal { N } _ { m , m } ^ { l }  )  \cong R(\mathbb{Z}/m) \otimes K^*(S^2) \cong \mathbb{Z}[\sigma ] / (\sigma^m-1) \otimes \mathbb{Z}[\gamma]/(\gamma-1)^2
		$$
		concentrated in degree zero (the isomorphisms are just additive).  The vector bundle $\gamma$ denotes the tautological 
		line bundle over $S^2$. 
		
		The case of isotropy $\Z/2$ is completely analogue, resulting in a similar answer. We finish by saying that for a simply connected  5-manifold $M$, with action of $SU(2)$ with one isotropy type $(SU(2)/H)$ and orbit space $S^2$, there is an isomorphism of abelian groups
		$$K_{SU(2)}^*(M)\cong R(H)\otimes \Z[\gamma]/(\gamma-1)^2.$$
		
		Note that this represents the equivariant K-theory of  $SU(2)/H \times S^2$, a $5$-dimensional $SU(2)$-manifold with only one isotropy type, yet is not simply connected.
		
	\end{exam}
	
	\appendix
	\section{Additional proofs regarding the assumption (K)}
	We already said (see remark \ref{nota1}) that compact and discrete groups satisfy the collection of axioms we called assumption (K) when one takes care of the class of topological spaces where they act on. In this section we would discuss when the assumption (K) is still fulfilled for a large class of Lie groups. At this very instant, our goal is to prove that the classes of almost connected and linear Lie groups also satisfy the assumption (K). In order to do it, we will use the following equivalent assumption for a Lie group $G$:
	\begin{equation}
	\tag{AN}\label{eq:AN}
	\parbox{\dimexpr\linewidth-4em}{%
		\strut%
		\emph{
			For every finite-dimensional $G$-bundle $E \rightarrow X$ over a $G$-space $X$ such that $X/G$ is a compact space, there exists a finite-dimensional  $G$-representation $V$ and a finite-dimensional $G$-bundle $F \rightarrow X$ such that there is an isomorphism of $G$-vector bundles: $E \oplus F \cong X \times  V$.}  
		\strut
	}\end{equation}

	It was proved in \cite{Phil2}, Lemma 2.2, that assumption (AN) implies that the equivariant K-theory $K_G(X)$ is a $G$-cohomology theory. It is a very well-known fact that for torsion twistings, the associated twisted K-theory is a subring of a nontwisted K-theory represented by a central extension $\tilde{G}$ of $G$ by $S^1$. Then if this central extension satisfies assumtion (AN), it also satisfies assumption (K) as defined in Section \ref{proper$K$-theory}. Also, assumption (AN) is closely related to the property of being \emph{Bredon-compatible}, defined by J. Cantarero in \cite{canta} and \cite{cantat}  for representable groupoids. We use assumpion (AN) here because it happens to be easier to verify it for some particular cases. 
	\begin{thm}
		Let $$1 \to A\to G\xrightarrow{\pi}Q\to 1$$ be an extension of Lie groups, where $A$ is compact. If $G$ satisfies $(AN)$ then $Q$ also does.
		\begin{proof}
			Let $E\to X$ be a $Q$-vector bundle over a $Q$-space $X$. $X$ is a $G$-space via 
			the map $\pi$. Note that this action restricted to the subgroup $A$ is trivial. Similarly, the vector bundle $E$ can be seen as a $G$-vector bundle. As $G$ satisfies  assumption (AN), for the vector bundle $E$ regarded as a $G$-vector bundle there exists a finite-dimensional  $G$-representation $V$ and a finite-dimensional $G$-vector bundle $F \rightarrow X$ such that there is an isomorphism of $G$-vector bundles $E \oplus F \cong X \times  V$. As the group $A$ acts trivially on the space $X$, the image of $E$ (regarded as a $G$-vector bundle) via the natural morphism  $\phi: K_G(X)\to K_{Q}(X/A)=K_Q(X)$ is the vector bundle $E$ itself. Since the application $\phi$ is a ring homomorphism, $$\phi(E \oplus F)=E \oplus \phi(F)=X\times V$$as
			$Q$-vector bundles. This proves that assumption (AN) holds for $Q$. 
		\end{proof}
	\end{thm}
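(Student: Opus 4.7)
The plan is to transport the $Q$-data up to $G$ via the projection $\pi$, apply $(AN)$ for $G$, and then descend back to $Q$ by taking $A$-invariants; the compactness of $A$ is essential for the descent step.

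First, I would start with a finite-dimensional $Q$-vector bundle $E\to X$ over a $Q$-space with $X/Q$ compact. Pulling back along $\pi\colon G\to Q$ gives $X$ the structure of a $G$-space in which $A$ acts trivially, and turns $E$ into a $G$-vector bundle on whose fibers $A$ acts trivially. Since $A$ acts trivially on $X$, the orbit space $X/G$ coincides with $X/Q$, so it remains compact. Then I would invoke $(AN)$ for $G$ to produce a finite-dimensional $G$-representation $V$ and a finite-dimensional $G$-bundle $F\to X$ fitting into a $G$-equivariant isomorphism $E\oplus F\cong X\times V$.

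The key step is to convert this $G$-equivariant statement into a $Q$-equivariant one. I would apply the $A$-invariants functor: because $A$ is compact, averaging over $A$ gives a continuous $A$-equivariant projector onto $A$-fixed vectors in each fiber, so $F^A\subseteq F$ is a finite-dimensional $G$-subbundle and $V^A\subseteq V$ is a $G$-subrepresentation, both carrying only the residual $Q$-action. Applying this functor to the isomorphism $E\oplus F\cong X\times V$, and using that $E^A=E$ since $E$ already has trivial $A$-action, yields $E\oplus F^A\cong X\times V^A$. This is now an isomorphism between bundles on which $A$ acts trivially, so it descends to a $Q$-equivariant isomorphism, and $V^A$ is a $Q$-representation, $F^A$ a $Q$-bundle; this is exactly $(AN)$ for $Q$.

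The main technical obstacle is verifying that $F^A$ really is a locally trivial subbundle (and that the decomposition $F\cong F^A\oplus F^{A,\perp}$ is continuous), rather than just a fiberwise subspace; this is where compactness of $A$ is used, via Peter--Weyl/averaging arguments applied to an $A$-invariant hermitian metric on $F$. A cleaner packaging, which is essentially the one adopted in the author's proof, is to note that $E\mapsto E^A$ defines a ring homomorphism $\phi\colon K_G(X)\to K_Q(X)$ when $A$ acts trivially on $X$, fixing $[E]$ and sending $[X\times V]$ to $[X\times V^A]$, so the argument becomes an identity in $K_Q(X)$.
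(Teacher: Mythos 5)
Your proposal is correct and follows essentially the same route as the paper: pull the $Q$-data back along $\pi$, apply $(AN)$ for $G$, and descend by taking $A$-invariants (the paper packages this as the natural additive map $\phi\colon K_G(X)\to K_Q(X)$). In fact your version is slightly more careful than the paper's, which writes the conclusion as $E\oplus\phi(F)\cong X\times V$ where it should read $X\times V^A$, and you also make explicit the averaging argument showing $F^A$ is a genuine subbundle.
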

	
	\begin{thm}
		Suppose  $H$ is a closed  subgroup of the Lie group $G$. If $G$ satisfies assumption (AN), then $H$ also does.
		\begin{proof}
			Let $E \rightarrow X$ be a $H$-vector bundle over a $H$-space $X$, $H$ acting on the right on $G$ by multiplication. Consider the $G$-vector bundle
			$$
			G \times_H E \rightarrow G \times_H X 
			$$over the $G$-space $G \times_H X$. Using the assumption (AN), for this vector bundle there exists a $G$-vector bundle $F \rightarrow G \times_H X$ and a $G$-representation $V$ such that there is an isomorphism of $G$-vector bundles over $G \times_H X$
			$$
			(G \times_H E) \oplus F \cong (G \times_H X) \times V.$$
			
			We consider the $H$-equivariant map
			$$
			\varepsilon_X : X \rightarrow G \times_H X,
			$$defined by $x\mapsto [e,x]$. 
			Pulling back with $\varepsilon_X$ there is an isomorphism of $H$-vector bundles over $X$. Therefore
			$$
			\varepsilon_X^*(G \times_H E)\oplus \varepsilon_x^*(F) \cong E\oplus \varepsilon_x^*(F)\cong X\times V|_H
			$$as $H$-equivariant bundles. Then the assumption (AN) holds for $H$.
		\end{proof}
	\end{thm}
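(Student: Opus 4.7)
The plan is to reduce the $(AN)$ property for $H$ to the already-assumed $(AN)$ property for $G$ by using the induction functor $\Ind_H^G(-) = G \times_H (-)$ to turn an $H$-equivariant situation into a $G$-equivariant one, apply the hypothesis, and then pull back via the unit of the adjunction $\Ind_H^G \dashv \Res_H^G$ to recover an $H$-equivariant statement.

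First I would start with an arbitrary $H$-vector bundle $E \to X$ over an $H$-space $X$ whose orbit space $X/H$ is compact. The key observation is that the induced space $G \times_H X$ is a $G$-space with orbit space $(G \times_H X)/G \cong X/H$, which is compact, so the hypothesis of $(AN)$ for $G$ is available. Moreover, $G \times_H E \to G \times_H X$ is naturally a $G$-vector bundle of the same (finite) rank. Applying $(AN)$ for $G$ then produces a finite-dimensional $G$-representation $V$ and a finite-dimensional $G$-vector bundle $F \to G \times_H X$ such that
\[
(G \times_H E) \oplus F \;\cong\; (G \times_H X) \times V
\]
as $G$-equivariant vector bundles.

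Next I would introduce the canonical $H$-equivariant map $\varepsilon_X \co X \to G \times_H X$ given by $x \mapsto [e,x]$ (this is the unit of the induction/restriction adjunction). Pulling back the displayed isomorphism along $\varepsilon_X$ yields an isomorphism of $H$-vector bundles over $X$. The two ingredients to check here are: (i) $\varepsilon_X^*(G \times_H E) \cong E$ as $H$-bundles, which follows because the fiber of $G \times_H E$ over $[e,x]$ is canonically $E_x$ with its original $H$-action; and (ii) $\varepsilon_X^*((G \times_H X) \times V) \cong X \times \Res_H^G V$, which is immediate since the product bundle pulls back to a product and the $G$-action on $V$ restricts to an $H$-action. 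Setting $W = \Res_H^G V$ and $F_H = \varepsilon_X^*(F)$ gives the desired $E \oplus F_H \cong X \times W$ as $H$-bundles, establishing $(AN)$ for $H$.

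The only mildly delicate point is the compactness check $(G \times_H X)/G \cong X/H$ and the bookkeeping that pulling back preserves the direct-sum decomposition fiberwise in an $H$-equivariant manner; both are formal but need to be stated explicitly. There is no significant obstacle beyond verifying these compatibilities, since the entire argument rests on the naturality of induction and restriction together with the fact that the hypothesis on $G$ already guarantees the stable-triviality statement we need.
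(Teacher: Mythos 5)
Your proposal is correct and follows essentially the same route as the paper: induce up to a $G$-bundle over $G\times_H X$, apply $(AN)$ for $G$, and pull back along the unit $\varepsilon_X\co X\to G\times_H X$ of the induction--restriction adjunction. You are in fact slightly more careful than the paper in explicitly verifying the compactness of $(G\times_H X)/G\cong X/H$ and the identifications $\varepsilon_X^*(G\times_H E)\cong E$ and $\varepsilon_X^*((G\times_H X)\times V)\cong X\times\Res_H^G V$.
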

	\begin{defn}
		A linear (real) group is a closed subgroup of the group $GL_n(\R)$.
	\end{defn}
	\begin{thm}[\cite{Phil2}, Lemma 2.1]
		If $G$ is a linear group, then it satisfies assumption (AN).
	\end{thm}
	\begin{thm}
		Consider an extension
  $$1\to A\to G\to Q$$for a linear group $G$ with $A$ compact. Let $\rho$ be an irreducible representation of $A$. Consider the central extension $$1 \to S^1\to \widetilde{Q}_{[\rho]}\to Q_{[\rho]} \to 1.$$ Then $\widetilde{Q}_{[\rho]}$ satisfies assumption (AN).
		\begin{proof}
			The group $G_{[\rho]}$ is a subgroup of a linear group, hence it is linear. The group $\widetilde{G}_{[\rho]}$ by definition a subgroup of $G_{[\rho]} \times U(V_{[\rho]})$ which is the product of linear groups. Also, since $\widetilde{G}_{[\rho]}$ is linear, it satisfies (AN) and therefore $\widetilde{Q}_{[\rho]} = \widetilde{G}_{[\rho]}/A$.
		\end{proof}
	\end{thm}
	
	As a result by Phillips, the equivariant K-theory defined with finite-dimensional $\tilde{Q}_{[\rho]}$-vector bundles is a $\tilde{Q}_{[\rho]}$-cohomology theory on the category of locally compact Hausdorff second-countable spaces. Decomposing by the characters of $S^1$, the ${}^{\widetilde{Q}_{[\rho]}}K^*_{Q_{[\rho]}}$-theory is a $Q_{[\rho]}$-equivariant cohomology theory. 

	We recall that a topological group $G$ is called almost connected if $G/G_0$ is compact, where $G_0$ denotes the identity component of $G$. Any compact group is almost connected. Any connected group is almost connected. A Lie group is almost connected precisely when it has finitely many components.
	By Theorem 3.1 in \cite{Phil2}, for every almost connected group $G$ there is a maximal compact subgroup $K$.
	\begin{thm}[Theorem 3.2 in \cite{Phil2}]\label{teophil}
		Let $G$ be an almost connected group, and let $X$ be a proper $G$-space. There is a maximal compact subgroup $K$, and a closed $K$-invariant subset
		$S$ of $X$ such that the natural map $(g,x)\mapsto gx$ is a $G$-homeomorphism from $G\times_KS$ to $X$.
	\end{thm}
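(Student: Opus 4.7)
The plan is to reduce the claim to the existence of a $G$-equivariant map $\phi : X \to G/K$, after which one sets $S = \phi^{-1}(eK)$ and applies Theorem \ref{conditionS}. Since Lie groups satisfy condition (S), the preimage $S$ is closed and $K$-invariant, and Theorem \ref{conditionS} gives directly that $[g,s] \mapsto g\cdot s$ is a $G$-homeomorphism $G\times_K S \to X$. So the entire difficulty is concentrated in producing such a $\phi$.

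For the construction of $\phi$, I would first invoke the classical structure theory for almost connected Lie groups (Cartan--Iwasawa--Malcev together with Mostow's theorem): $K$ is unique up to conjugacy, every compact subgroup of $G$ is conjugate into $K$, and the quotient $G/K$ is diffeomorphic to a Euclidean space. More importantly, $G/K$ carries a $G$-invariant Riemannian metric of nonpositive curvature, so it is a Hadamard manifold; in particular, any compact group action on $G/K$ has a fixed point (Cartan's fixed point theorem), and any finitely supported probability measure on $G/K$ has a unique center of mass that depends continuously on the measure.

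Next, I would build $\phi$ by patching local equivariant maps. Since the $G$-action on $X$ is proper, there exists a $G$-invariant open cover of $X$ by tubes of the form $U_i \cong G\times_{L_i} S_i$, where each $L_i$ is a compact isotropy subgroup and $S_i$ is a slice. For each $i$, conjugate $L_i$ into $K$ by some $g_i \in G$ and pick a point $p_i \in (G/K)^{L_i}$, which exists by Cartan's fixed-point theorem applied to the compact group $L_i$ acting on $G/K$; then define a local $G$-equivariant map $\phi_i : U_i \to G/K$ by $[g,s]\mapsto g\cdot p_i$. To glue these local pieces into a single global $\phi$, choose a $G$-invariant partition of unity $\{\psi_i\}$ subordinate to $\{U_i\}$ (existing by paracompactness of $X/G$ for proper actions) and define $\phi(x)$ as the center of mass in $G/K$ of the measure $\sum_i \psi_i(x)\,\delta_{\phi_i(x)}$. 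The nonpositive curvature ensures this center of mass is well-defined, continuous, and $G$-equivariant (by uniqueness of barycenters and $G$-invariance of the metric).

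The main obstacle is precisely this construction of $\phi$: in particular, the global gluing of the local equivariant maps through the barycenter construction on a Hadamard manifold, which crucially relies on Mostow's theorem identifying $G/K$ with a complete simply connected nonpositively curved manifold. Once $\phi$ is in hand, all remaining steps are formal applications of Theorem \ref{conditionS}.
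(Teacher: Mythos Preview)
The paper itself does not prove this statement; it simply refers to Theorem~3.2 of \cite{Phil2}, which is Abels' global $K$-slice theorem. Your overall strategy---reduce to constructing a $G$-equivariant map $\phi\colon X\to G/K$ and then invoke Theorem~\ref{conditionS}---is correct and is exactly how the result is obtained. Building $\phi$ locally from slices and gluing with a $G$-invariant partition of unity and an averaging operation is also the right outline.

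The gap is in the averaging mechanism. You claim that $G/K$ carries a $G$-invariant Riemannian metric of nonpositive sectional curvature, so that it is a Hadamard manifold on which Riemannian barycenters are available, unique, and $G$-equivariant. This fails for general almost connected Lie groups. Take $G$ to be the universal cover of $\Sl_2(\R)$: it is connected, and its maximal compact subgroup is trivial (any compact subgroup projects into a conjugate of $SO(2)\subset\Sl_2(\R)$, whose preimage in the universal cover is $\R$). Hence $G/K=G$, and a $G$-invariant metric on $G/K$ is simply a left-invariant metric on $G$. But every left-invariant metric on this group has sectional curvatures of both signs (see Milnor, \emph{Curvatures of left invariant metrics on Lie groups}, Adv.\ Math.\ 21 (1976), or the discussion of the $\widetilde{\Sl_2(\R)}$ geometry in Scott's survey on Thurston geometries); more generally a connected Lie group carrying a left-invariant metric of nonpositive sectional curvature is necessarily solvable. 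The Cartan--Iwasawa--Malcev--Mostow theorems you invoke yield conjugacy of maximal compact subgroups and the diffeomorphism $G/K\cong\R^n$, but they do not furnish a CAT(0) structure; that is available only in the reductive case, where $G/K$ is a Riemannian symmetric space of noncompact type.

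Abels' actual argument (Math.\ Ann.\ 212 (1974), 1--19) replaces the Hadamard barycenter by structure-theoretic reductions: one passes to quotients and subgroups to reach situations where either the group is linear and one can average inside the ambient symmetric space $GL_n(\R)/O(n)$, or $K$ is trivial and the proper action is free, so one trivializes the resulting principal $G$-bundle using contractibility of $G$. So your plan is on the right track, but the convexity step cannot be executed on $G/K$ itself in the generality claimed.
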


	\begin{thm}
		If $G$ is an almost connected Lie group acting on a compact $G$-space $X$, then it satisfies assumption (AN). \begin{proof}
		Let $S$ and $K$ be as in the Theorem \ref{teophil}, thus $X\cong G\times_K S$. Let $E'$ be a finite dimensional $K$-vector bundle over $S$. The vector bundle $G\times_KE'$, is a $G$-vector bundle over the space $G\times_KS$. On the other side, we build on the arguments of example $iii)$ page 132 of \cite{segal}. Given a $G$-vector bundle $E$ over the space $G\times_KS$, we take the homeomorphism $\beta:G\times E\rightarrow G\times E$, defined by $(g,\xi)\mapsto (g,g^{-1}\xi)$. We note that the space $S$ can be identified with the subspace $K\times_KS\subset G\times_KS$, thus we can consider the vector bundle $E'=E|_{K\times_KS}$. We have that $$\beta^{-1}(G\times E')=\{(g,\xi)\in G\times E:g^{-1}\xi\in E'\}.$$ Therefore, restricting to the action of $K$ we obtain a map $\overline{\beta}:\beta^{-1}(G\times E')\rightarrow G\times_KE'$. Finally, we consider the natural open map $\pi:\beta^{-1}(G\times E')\rightarrow E$ defined by $(g,\xi)\mapsto \xi$ in order to induce a homeomorphism: $\phi:E\rightarrow G\times_KE'$, defined as $\phi(\xi)=\overline{\beta}(g,\xi)$ if $\pi(g,\xi)=\xi$. 
  
   As the group $K$ is compact,  it is also a closed subgroup of $G$. Then, by Cartan's closed subgroup theorem (Theorem 20.10 in \cite{lee2003introduction}), $K$ is a compact Lie group and therefore $K$ satisfies (AN). So there is a finite-dimensional $K$-vector bundle $F'$ over $S$ and a finite-dimensional $K$-representation $V$ such that $$E'\oplus F'\cong S\times V.$$ It implies
			$$E\oplus (G\times_KF')\cong X\times(G\times_KV).$$ This proves assumption (AN) for $G$.
		\end{proof}
	\end{thm}
	
	\begin{thm}\label{CocienteAC}
		Suppose  $A$ is a closed, normal subgroup of an almost connected group $G$. Then $Q=G/A$ is almost connected.
		\begin{proof} This theorem is a direct consequence of \cite{prolie}, Lemma 5.7. Nevertheless we provide a sketch of a proof using different arguments. Let $Q_0$ be the identity component of $Q$. Note that $\overline{G_0A}/A\subset Q_0$. Therefore $$Q/Q_0\cong [Q/(\overline{G_0A}/A)]/[Q_0/(\overline{G_0A}/A)].$$
   On the other side, we note that$$Q/(\overline{G_0A}/A)=(G/A)/(\overline{G_0A}/A)\cong G/\overline{G_0A}\cong (G/G_0)/(\overline{G_0A}/G_0).$$ Since $G/G_0$ is a compact space, the quotient space $(G/G_0)/(\overline{G_0A}/G_0)$ also is. So $Q/(\overline{G_0A}/A)$ is also compact and then $Q/Q_0$ is a compact space.
		\end{proof}
	\end{thm}
	
	\begin{thm}\label{indicefinitoAC}
		Suppose  $H$ is a  finite index, closed subgroup of an almost connected Lie group $G$. Then $H$ is almost connected.
		\begin{proof}
			By the finite index of $H$ in $G$, $$G=g_1H\cup g_2H\cup\cdots \cup g_kH,$$for $g_1=e,g_2,\dots ,g_k$ a set of representatives of the quotient $G/H$. Note that for each $i=1,...,k$ the set $g_iH$ is a closed set due it is the image of $H$ via the homeomorphism $g_i:G\rightarrow G$ the which maps $h\mapsto g_ih$. Observe that $G_0$ is contained in $H$, because $$G_0=(G_0\cap g_1H)\cup  (G_0\cap g_2H)\cup\cdots \cup (G_0\cap g_kH)$$then $G_0$ is a disjoint union of finite closed sets in $G_0$, as $G_0$ is connected, $G_0$ is contained in the coset $H$. Then $G_0=H_0$ and $H/H_0=H/G_0$ that is a closed subset of $G/G_0$, as $G/G_0$ is compact then $H/H_0$ is compact. 
		\end{proof}
	\end{thm}
	
	
	\begin{thm}\label{extensionAC}
		Suppose  $1 \to A\to G\to Q\to 1$, where $A$ is a compact normal subgroup of the Lie group $G$, then if $Q$ is almost connected, $G$ is almost connected.
		\begin{proof}
Firstly, we point out that an almost connected Lie group $Q$ only has a finite number of connected components. 

So, let us suppose that $A$ is a compact normal subgroup of $G$. As the right action of $A$ on $G$ is always free and $A$ is a closed Lie subgroup of $G$, the quotient map $\pi:G\rightarrow Q=G/A$ is a principal $A$-bundle. Then, by using the long exact sequence of homotopy groups associated to the fibration $A\rightarrow G\rightarrow Q$, we have a long exact sequence: \begin{align}\label{sequence}\cdots\rightarrow \pi_0(A)\rightarrow \pi_0(G)\rightarrow \pi_0(Q)\to0.\end{align} Due to the compactness of $A$, the group $\pi_0(A)\cong A/A_0$ is finite. By the hypothesis $Q$ is almost connected, then $\pi_0(Q)\cong Q/Q_0$ is finite because $Q$ is a Lie group. As the sequence (\ref{sequence})  is exact, we have that $\pi_0(G)=G/G_0$ must be a finite group and therefore, a compact group. 
			
		\end{proof}
	\end{thm}

	\begin{thm}Consider an extension
  $$1\to A\to G\to Q$$for an almost connected group $G$ with $A$ compact. Let $\rho$ be an irreducible representation of $A$. Consider the associated central extension
		 $$1 \to S^1\to \widetilde{Q}_{[\rho]}\to Q_{[\rho]} \to 1.$$ Then $\widetilde{Q}_{[\rho]}$ is almost connected.
		\begin{proof}
			$G_{[\rho]}$ is a subgroup  of finite index within an almost connected subgroup; therefore, by Theorem \ref{indicefinitoAC}, it is almost connected.  Furthermore, by Theorem \ref{CocienteAC}, the quotient of an almost connected group by a normal subgroup is almost connected, thus $Q_{[\rho]}$ is an almost connected group.
			
			Since $\widetilde{Q}_{[\rho]}$ is an extension of $Q_{[\rho]}$ by $S^1$ which is compact, by Theorem \ref{extensionAC}, $\widetilde{Q}_{[\rho]}$ is almost connected.
		\end{proof}
	\end{thm}
	

	\bibliography{sample}

\providecommand{\bysame}{\leavevmode\hbox to3em{\hrulefill}\thinspace}
\providecommand{\MR}{\relax\ifhmode\unskip\space\fi MR }
\providecommand{\MRhref}[2]{%
  \href{http://www.ams.org/mathscinet-getitem?mr=#1}{#2}
}
\providecommand{\href}[2]{#2}
\begin{thebibliography}{{P}hi89b}

\bibitem[AGU18]{AGU}
{A}ndr\'{e}s \'{A}ngel, {J}os\'{e}~{M}anuel {G}\'{o}mez, and {B}ernardo {U}ribe, \emph{{E}quivariant complex bundles, fixed points and equivariant unitary bordism}, {A}lgebr. {G}eom. {T}opol. \textbf{18} (2018), no.~7, 4001--4035. \MR{3892237}

\bibitem[AS04]{AS}
{M}ichael {A}tiyah and {G}raeme {S}egal, \emph{{T}wisted {$K$}-theory}, {U}kr. {M}at. {V}isn. \textbf{1} (2004), no.~3, 287--330. \MR{2172633}

\bibitem[AS06]{AtiySegAHSS}
\bysame, \emph{{T}wisted {$K$}-theory and cohomology}, {I}nspired by {S}. {S}. {C}hern, {N}ankai {T}racts {M}ath., vol.~11, {W}orld {S}ci. {P}ubl., {H}ackensack, {NJ}, 2006, pp.~5--43. \MR{2307274}

\bibitem[{A}ti62]{AtiyKunneth}
{M}.~{F}. {A}tiyah, \emph{{V}ector bundles and the {K}\"{u}nneth formula}, {T}opology \textbf{1} (1962), 245--248. \MR{150780}

\bibitem[{B}re72]{bredon}
{G}len~{E}. {B}redon, \emph{{I}ntroduction to compact transformation groups}, {A}cademic {P}ress, {N}ew {Y}ork-{L}ondon, 1972, {P}ure and {A}pplied {M}athematics, Vol. 46. \MR{0413144}

\bibitem[BV16]{BaVe2016}
{N}o\'{e} {B}\'{a}rcenas and {M}ario {V}el\'{a}squez, \emph{{E}quivariant {$K$}-theory of central extensions and twisted equivariant {$K$}-theory: {$SL_3\mathbb{Z}$} and {$St_3\mathbb{ Z}$}}, {H}omology {H}omotopy {A}ppl. \textbf{18} (2016), no.~1, 49--70. \MR{3465637}

\bibitem[{C}an11]{cantat}
{J}ose {C}antarero, \emph{{T}wisted {$K$}-theory for actions of {L}ie groupoids and its completion theorem}, {M}ath. {Z}. \textbf{268} (2011), no.~1-2, 559--583. \MR{2805447}

\bibitem[{C}an12]{canta}
\bysame, \emph{{E}quivariant {$K$}-theory, groupoids and proper actions}, {J}. {K}-{T}heory \textbf{9} (2012), no.~3, 475--501. \MR{2955971}

\bibitem[CRV22]{CRV}
{G}erman {C}ombariza, {J}uan {R}odriguez, and {M}ario {V}elasquez, \emph{{I}nduced character in equivariant {K}-theory, wreath products and pullback of groups}, {R}ev. {C}olomb. {M}at. \textbf{56} (2022), no.~1, 35--61 (English).

\bibitem[DK00]{Kolk}
{J}.~{J}. {D}uistermaat and {J}. {A}.~{C}. {K}olk, \emph{{L}ie groups}, {S}pringer, {N}ew {Y}ork, 2000.

\bibitem[{D}wy08]{dwyer}
{C}hristopher {D}wyer, \emph{{T}wisted equivariant {$K$}-theory for proper actions of discrete groups}, $K$-Theory \textbf{38} (2008), no.~2, 95--111. \MR{2366557}

\bibitem[FHT07]{FHT}
{D}aniel~{S}. {F}reed, {M}ichael~{J}. {H}opkins, and {C}onstantin {T}eleman, \emph{{{T}wisted equivariant {K}-theory with complex coefficients}}, {J}ournal of {T}opology \textbf{1} (2007), no.~1, 16--44.

\bibitem[GU17]{GU}
{J}os\'{e}~{M}anuel {G}\'{o}mez and {B}ernardo {U}ribe, \emph{{A} decomposition of equivariant {$K$}-theory in twisted equivariant {$K$}-theories}, {I}nternat. {J}. {M}ath. \textbf{28} (2017), no.~2, 1750016, 23. \MR{3615587}

\bibitem[HM07]{prolie}
{K}.{H}. {H}ofmann and {S}.{A}. {M}orris, \emph{{T}he {L}ie {T}heory of {C}onnected {P}ro-{L}ie {G}roups: {A} {S}tructure {T}heory for {P}ro-{L}ie {A}lgebras, {P}ro-{L}ie {G}roups, and {C}onnected {L}ocally {C}ompact {G}roups}, {EMS} tracts in mathematics, {E}uropean {M}athematical {S}ociety, 2007.

\bibitem[{L}\"05]{luckcoh}
{W}olfgang {L}\"{u}ck, \emph{{E}quivariant cohomological {C}hern characters}, {I}nternat. {J}. {A}lgebra {C}omput. \textbf{15} (2005), no.~5-6, 1025--1052. \MR{2197819}

\bibitem[{L}ee03]{lee2003introduction}
{J}.{M}. {L}ee, \emph{{I}ntroduction to {S}mooth {M}anifolds}, {G}raduate {T}exts in {M}athematics, {S}pringer, 2003.

\bibitem[LO01]{lo}
{W}olfgang {L}\"{u}ck and {B}ob {O}liver, \emph{{T}he completion theorem in {$K$}-theory for proper actions of a discrete group}, {T}opology \textbf{40} (2001), no.~3, 585--616. \MR{1838997}

\bibitem[LU14]{uribe}
{W}olfgang {L}\"{u}ck and {B}ernardo {U}ribe, \emph{{E}quivariant principal bundles and their classifying spaces}, {A}lgebr. {G}eom. {T}opol. \textbf{14} (2014), no.~4, 1925--1995. \MR{3331607}

\bibitem[{M}ac58]{Mac}
{G}eorge~{W}. {M}ackey, \emph{{U}nitary representations of group extensions. {I}}, {A}cta {M}ath. \textbf{99} (1958), 265--311. \MR{98328}

\bibitem[{M}oo76]{moore}
{C}alvin~{C}. {M}oore, \emph{{G}roup extensions and cohomology for locally compact groups. {III}}, {T}rans. {A}mer. {M}ath. {S}oc. \textbf{221} (1976), no.~1, 1--33. \MR{414775}

\bibitem[MT68]{mosher}
{R}obert~{E}. {M}osher and {M}artin~{C}. {T}angora, \emph{{C}ohomology operations and applications in homotopy theory}, {H}arper {\&} {R}ow, {P}ublishers, {N}ew {Y}ork-{L}ondon, 1968. \MR{0226634}

\bibitem[{P}hi89a]{Phil}
{N}.~{C}hristopher {P}hillips, \emph{{E}quivariant {$K$}-theory for proper actions}, {P}itman {R}esearch {N}otes in {M}athematics Series, vol. 178, {L}ongman {S}cientific {\&} {T}echnical, {H}arlow; copublished in the {U}nited {S}tates with {J}ohn {W}iley {\&} {S}ons, {I}nc., {N}ew {Y}ork, 1989. \MR{991566}

\bibitem[{P}hi89b]{Phil2}
\bysame, \emph{{E}quivariant {$K$}-theory for proper actions {II}: {S}ome cases in which finite dimensional bundles suffice}, {P}itman {R}esearch {N}otes in {M}athematics {S}eries, vol. 178, {L}ongman {S}cientific {\&} {T}echnical, {H}arlow; copublished in the {U}nited {S}tates with {J}ohn {W}iley {\&} {S}ons, {I}nc., {N}ew {Y}ork, 1989. \MR{991566}

\bibitem[Ros89]{rosenberg-ct}
Jonathan Rosenberg, \emph{Continuous-trace algebras from the bundle theoretic point of view}, Journal of the Australian Mathematical Society. Series A. Pure Mathematics and Statistics \textbf{47} (1989), no.~3, 368–381.

\bibitem[RS87]{rosenberg-schochet}
Jonathan Rosenberg and Claude Schochet, \emph{The {K{\"u}nneth} theorem and the universal coefficient theorem for {Kasparov}'s generalized {K}-functor}, Duke Math. J. \textbf{55} (1987), 431--474 (English).

\bibitem[RSP61]{palais}
{R}ichard {S}.~{P}alais, \emph{{O}n the {E}xistence of {S}lices for {A}ctions of {N}on-{C}ompact {L}ie {G}roups}, {A}nnals of {M}athematics \textbf{73} (1961), no.~2, 295--323.

\bibitem[Sch84]{schochet-IV}
Claude Schochet, \emph{Topological methods for {{\(C^*\)}}-algebras. {IV}: {Mod} p homology}, Pac. J. Math. \textbf{114} (1984), 447--468 (English).

\bibitem[{S}eg68]{segal}
{G}raeme {S}egal, \emph{{E}quivariant {$K$}-theory}, {I}nst. {H}autes \'{E}tudes {S}ci. {P}ubl. {M}ath. (1968), no.~34, 129--151. \MR{234452}

\bibitem[{S}im16]{Simas}
{F}abio {S}imas, \emph{{N}onnegatively curved five-manifolds with non-abelian symmetry}, {G}eom. {D}edicata \textbf{181} (2016), 61--82. \MR{3475738}

\bibitem[{S}te16]{steinberg}
{R}obert {S}teinberg, \emph{{L}ectures on {C}hevalley groups}, {U}niversity {L}ecture {S}eries, vol.~66, {A}merican {M}athematical {S}ociety, {P}rovidence, {RI}, 2016, {N}otes prepared by {J}ohn {F}aulkner and {R}obert {W}ilson, {R}evised and corrected edition of the 1968 original [ MR0466335], {W}ith a foreword by {R}obert {R}. {S}napp. \MR{3616493}

\bibitem[SW15]{sati}
{H}isham {S}ati and {C}raig {W}esterland, \emph{{T}wisted {M}orava {K}-theory and {E}-theory}, {J}. {T}opol. \textbf{8} (2015), no.~4, 887--916. \MR{3431663}

\bibitem[TT14]{dual}
{X}iang {T}ang and {H}sian-{H}ua {T}seng, \emph{{D}uality theorems for \'{e}tale gerbes on orbifolds}, {A}dv. {M}ath. \textbf{250} (2014), 496--569. \MR{3122175}

\bibitem[TXLG04]{twistedstacks}
{J}ean-{L}ouis {T}u, {P}ing {X}u, and {C}amille {L}aurent {G}engoux, \emph{{T}wisted {$K$}-theory of differentiable stacks}, {A}nn. {S}ci. \'{E}cole {N}orm. {S}up. (4) \textbf{37} (2004), no.~6, 841--910. \MR{2119241}

\bibitem[Was81]{wassermann}
A.J. Wassermann, \emph{Automorphic actions of compact groups on operator algebras}, Graduate School of Arts and Sciences, University of Pennsylvania, 1981.

\end{thebibliography}
	\bibliographystyle{amsalpha}
\end{document}